\documentclass{article}

\usepackage[english]{babel}

\usepackage[letterpaper,top=2cm,bottom=2cm,left=2.85cm,right=2.85cm,marginparwidth=1.75cm]{geometry}

\usepackage{amsmath}
\usepackage{amssymb}
\usepackage{amsthm}
\usepackage{mathtools}
\usepackage{cite}
\usepackage{tabstackengine}
\stackMath
\makeatletter
\renewcommand\TAB@delim[1]{\scriptstyle#1}
\makeatother
\setstackgap{S}{2pt}
\usepackage{xcolor}
\usepackage{graphicx}
\usepackage[colorlinks=true, hidelinks]{hyperref}
\newtheorem{theorem}{Theorem}[section]
\newtheorem{lemma}[theorem]{Lemma}
\newtheorem{proposition}[theorem]{Proposition}
\newtheorem{corollary}[theorem]{Corollary}

\begin{document}
\begin{center}
		\vskip 1cm{\LARGE \bf Integral Representations for Multiple Ap\'ery-Like Series}\\
		\vskip 5mm
		Jorge Antonio Gonz\'alez Layja\footnotemark\\
        Mexico
	\end{center}
\footnotetext{Email: \href{mailto:jorgelayja16@gmail.com}{jorgelayja16@gmail.com}}
\begin{abstract}
We derive integral representations for six families of multiple Ap\'ery-like series using repeated integration by parts and Fourier expansions. The resulting formulas are expressed in terms of polylogarithms, Legendre chi functions, and inverse tangent integrals. As applications, we recover several known evaluations as special cases of our results, expressed in terms of Dirichlet eta, beta, and lambda functions. In addition, we obtain a new identity expressing a family of such series as linear combinations of products of Dirichlet eta values.
\end{abstract}

\noindent\textbf{Keywords}: Ap\'ery-like series; harmonic sums; central binomial coefficients; Fourier expansions; special functions
\\[1ex]
\noindent\textbf{AMS Subject Classifications (2020)}: 11M32, 40C10, 11B65, 11M06

\section{Introduction}
Series involving the central binomial coefficient $\binom{2n}{n}$, referred to as \emph{Ap\'ery‑like series}, have attracted sustained attention due to their connections with special functions, most notably the Riemann zeta function $\zeta$, as well as their evaluations in terms of classical constants, as illustrated in \cite{preprint3,journal5,preprint1,journal9,journal11,journal10,journal7,preprint2,journal8,journal6,journal12}. This terminology originates from Ap\'ery’s \cite{journal4} celebrated proof of the irrationality of $\zeta (2)$ and $\zeta (3)$, in which the series representations
$$\zeta (2)=3\sum _{n=1}^{\infty }\frac{1}{n^2\binom{2n}{n}},\qquad \zeta (3)=\frac{5}{2}\sum _{n=1}^{\infty }\frac{(-1)^{n-1}}{n^3\binom{2n}{n}},$$
play a fundamental role. When such series also involve nested harmonic sums, they are called \emph{multiple Ap\'ery‑like series}, which form the focus of the present work. A notable recent contribution in this direction is due to Gen\v{c}ev and Rucki \cite{journal2}, who established evaluations for several such series in terms of special values of Dirichlet $L$-functions. In particular, their paper concludes with a conjecture \cite[Eq.~(7.2)]{journal2}, subsequently proved by Xu \cite[Thm.~2.2]{preprint1} via two independent hypergeometric methods, which asserts that for $j\in \mathbb{Z}_{\ge 0}$,
$$\sum _{n=1}^{\infty }\frac{\binom{2n}{n}}{n\,4^n}\sum _{n\ge n_1\ge \cdots \ge n_j\ge 1}\prod _{i=1}^j\frac{1}{n_i^2}=2\,\eta (2j+1),$$
where $\eta$ denotes the Dirichlet eta function. A second related result \cite[Thm.~7.1]{journal2}, which was originally conditional on the preceding conjecture, states that for $j\in \mathbb{Z}_{>0}$, a modified multiple Ap\'ery-like series can be expressed in terms of the Dirichlet lambda function $\lambda$, namely
$$\sum _{n=1}^{\infty }\frac{\binom{2n}{n}}{n\,4^n}\sum _{n\ge n_1\ge \cdots \ge n_j\ge 1}\frac{4^{n_j}}{\binom{2n_j}{n_j}}\prod _{i=1}^j\frac{1}{n_i^2}=4\,\lambda (2j+1).$$
In this paper, we employ a systematic approach that recovers both of these results and yields further explicit evaluations, including a new identity expressing a family of such series as a finite alternating sum of products of Dirichlet eta values. The method is based on iterated applications of integration by parts to suitable trigonometric integrals, which generate recursive relations leading to nested harmonic sums, together with Fourier expansions involving the polylogarithm and the Legendre chi function that allow such integrals to be expressed in terms of special functions.

More precisely, for $j\in \mathbb{Z}_{\ge 0}$ and $k\in \mathbb{Z}_{>0}$, we establish integral representations for
$$\sum _{n=1}^{\infty }\frac{\binom{2n}{n}}{n^k4^n}\zeta _n^{\star }(\{2\}_j),\qquad\sum _{n=1}^{\infty }\frac{4^n}{n^{k+1}\binom{2n}{n}}t_n^{\star }(\{2\}_j),$$
along with a shifted $(2n+1)$-indexed analogue of the second family. These correspond to Theorems \ref{thm3.1}, \ref{thm3.4}, and \ref{thm3.9}.

For $j\in \mathbb{Z}_{>0}$, we also obtain formulas for the related classes
$$\sum _{n=1}^{\infty }\frac{\binom{2n}{n}}{n^k4^n}\sum _{n\ge n_1\ge \cdots \ge n_j\ge 1}\frac{4^{n_j}}{\binom{2n_j}{n_j}}\prod _{i=1}^j\frac{1}{n_i^2},$$
$$\sum _{n=1}^{\infty }\frac{4^n}{n^{k+1}\binom{2n}{n}}\sum _{n\ge n_1\ge \cdots \ge n_{j-1}>n_j\ge 0}\frac{\binom{2n_j}{n_j}}{(2n_j+1)4^{n_j}}\prod _{i=1}^{j-1}\frac{1}{(2n_i-1)^2},$$
as well as a $(2n+1)$-indexed variant of the latter. These are given in Theorems \ref{thm3.6}, \ref{thm3.8}, and \ref{thm3.11}.

Altogether, this yields six distinct integral representations, expressed in terms of polylogarithms, Legendre chi functions, and inverse tangent integrals. Here, $\zeta_n^{\star}(\{2\}_j)$ and $t_n^{\star }(\{2\}_j)$ denote the multiple harmonic star sums of depth $j$ and weight $2j$, and their odd analogues, defined by
$$\zeta_n^{\star}(\{2\}_j)\coloneqq\sum _{n\ge n_1\ge \cdots \ge n_j\ge 1}\prod _{i=1}^j\frac{1}{n_i^2},\qquad t_n^{\star}(\{2\}_j)\coloneqq\sum _{n\ge n_1\ge \cdots \ge n_j\ge 1}\prod _{i=1}^j\frac{1}{(2n_i-1)^2},$$
with the convention $\zeta_n^{\star}(\emptyset )=t_n^{\star}(\emptyset )\coloneqq1$. These are finite versions of the multiple zeta star values and multiple $t$-star values introduced by Hoffman in \cite{journal3,journal1}, respectively.

For completeness, we recall the special functions appearing in our results. The polylogarithm of order $s$ is defined by
$$
\operatorname{Li}_s(x)\coloneqq\sum _{n=1}^{\infty }\frac{x^n}{n^s},
\qquad
\begin{cases}
|x| < 1, & s \in \mathbb{C}, \\
x = 1, & \mathfrak{R}(s) > 1, \\
x = -1, & \mathfrak{R}(s) > 0.
\end{cases}
$$
In addition, $\operatorname{Li}_s(1)=\zeta (s)$ and $\operatorname{Li}_s(-1)=-\eta (s)$, where $\zeta$ and $\eta$ are the Riemann zeta and Dirichlet eta functions (see \cite[p.~189]{book4}). It satisfies
$$\operatorname{Li}_s(x)=\int _0^x\frac{\operatorname{Li}_{s-1}(t)}{t}\,dt.$$
The Legendre chi function is defined by
$$
\chi _s(x)\coloneqq\sum _{n=1}^{\infty }\frac{x^{2n-1}}{(2n-1)^s},
\qquad
\begin{cases}
|x| < 1, & s \in \mathbb{C}, \\
x = \pm 1, & \mathfrak{R}(s) > 1,
\end{cases}
$$
with $\chi _1(x)=\operatorname{arctanh} (x)$ and $\chi _s(1)=\lambda (s)$, where $\lambda $ is the Dirichlet lambda function (see \cite[p.~189]{book4}). It admits the representation $\chi _s(x)=\frac{1}{2}\left(\operatorname{Li}_s(x)-\operatorname{Li}_s(-x)\right)$.

For $m\in \mathbb{Z}_{> 0}$, the inverse tangent integrals are defined by
$$\operatorname{Ti}_m(x)\coloneqq\sum _{n=1}^{\infty }\frac{(-1)^{n-1}x^{2n-1}}{(2n-1)^m},\qquad |x|\le 1,$$
with $\operatorname{Ti}_1(x)=\arctan (x)$ and $\operatorname{Ti}_m(1)=\beta (m)$, where $\beta $ denotes the Dirichlet beta function (see \cite[p.~190]{book4}).

Similarly to the polylogarithm, the Legendre chi function and the inverse tangent integrals satisfy
$$\chi _s(x)=\int _0^x\frac{\chi _{s-1}(t)}{t}\,dt,\qquad \operatorname{Ti}_m(x)=\int _0^x\frac{\operatorname{Ti}_{m-1}(t)}{t}\,dt.$$
\newpage
\section{Lemmas}
In this section, we collect auxiliary lemmas that underpin the proofs of the main results. Lemmas \ref{lma2.1} and \ref{lma2.2} relate trigonometric integrals to finite sums involving powers of $\frac{\pi}{2}$, odd harmonic numbers, and nested harmonic sums. Lemma \ref{lma2.3} provides Fourier expansions for $\operatorname{Li}_k(\cos ^2(x))$, $\operatorname{Li}_k(\sin ^2(x))$, $\chi _k(\cos (x))$, and $\chi _k(\sin (x))$ with coefficients given by definite integrals involving $\ln ^{k-1}\!\left(\frac{1+t}{2\sqrt{t}}\right)$ and $\ln ^{k-1}\!\left(\frac{1+t^2}{2t}\right)$.

\begin{lemma} \label{lma2.1}The following identities hold:
\begin{alignat*}{2}
(\mathrm{i})&\quad&&\text{For }m\in \mathbb{Z}_{\ge 0},n\in \mathbb{Z}_{>0},\\
&&&\int _0^{\frac{\pi }{2}}x^{2m}\left((-1)^{n-1}+\cos (2nx)\right)\,dx=(2m)!\sum _{j=0}^m\frac{(-1)^{j+n-1}}{(2n)^{2j}(2m-2j+1)!}\left(\frac{\pi }{2}\right)^{2m-2j+1}.\\
(\mathrm{ii})&\quad&&\text{For }m,n\in \mathbb{Z}_{>0},\\
&&&\int _0^{\frac{\pi }{2}}x^{2m-1}\left((-1)^{n-1}+\cos (2nx)\right)\,dx\\
&&&=(2m-1)!\left(\sum _{j=0}^{m-1}\frac{(-1)^{j+n-1}}{(2n)^{2j}(2m-2j)!}\left(\frac{\pi }{2}\right)^{2m-2j}+(-1)^m\frac{1-(-1)^n}{(2n)^{2m}}\right).\\
(\mathrm{iii})&\quad&&\text{For }m,n\in \mathbb{Z}_{\ge 0},\\
&&&\int _0^{\frac{\pi }{2}}x^{2m}\frac{(-1)^{n-1}+\cos (2nx)}{\cos (x)}\,dx=2(2m)!\sum _{j=0}^m\frac{(-1)^{j+n-1}O_n^{(2j+1)}}{(2m-2j)!}\left(\frac{\pi }{2}\right)^{2m-2j}.\\
(\mathrm{iv})&\quad&&\text{For }m\in \mathbb{Z}_{>0}, n\in \mathbb{Z}_{\ge 0},\\
&&&\int _0^{\frac{\pi }{2}}x^{2m-1}\frac{1-\cos (2nx)}{\sin (x)}\,dx=2(2m-1)!\sum _{j=1}^m\frac{(-1)^{j-1}\overline{O}_n^{(2j)}}{(2m-2j)!}\left(\frac{\pi }{2}\right)^{2m-2j},
\end{alignat*}
where $O_n^{(m)}\coloneqq\sum _{k=1}^n\frac{1}{(2k-1)^m}$ and $\overline{O}_n^{(m)}\coloneqq\sum _{k=1}^n\frac{(-1)^{k-1}}{(2k-1)^m}$ denote the odd harmonic numbers of order $m$ and their alternating analogues, respectively.
\end{lemma}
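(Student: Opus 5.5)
Parts (i) and (ii) are elementary integrations by parts; parts (iii) and (iv) reduce to finite trigonometric sums and then to (i)--(ii)-type computations, with an induction on $n$ as an alternative route.

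\textbf{Parts (i) and (ii).} The constant term contributes $(-1)^{n-1}\frac{(\pi/2)^{p+1}}{p+1}$, where $p$ is the exponent of $x$. For the oscillatory term I will use the standard antiderivative
$$\int x^{p}\cos(ax)\,dx=\sum_{i\ge 0}p^{\underline{i}}\,x^{p-i}\,\frac{d^{i}}{d(ax)^{i}}\text{-type terms},$$
obtained by repeated integration by parts, and evaluate it at $x=\pi/2$ with $a=2n$. There $\sin(n\pi)=0$ and $\cos(n\pi)=(-1)^n$. Only terms with $\cos$ survive at the upper limit, namely those with an odd number of integrations past the first. This gives, for $p=2m$,
$$\int_0^{\pi/2}x^{2m}\cos(2nx)\,dx=(2m)!\sum_{j=1}^{m}\frac{(-1)^{j+n-1}}{(2n)^{2j}(2m-2j+1)!}\Big(\frac{\pi}{2}\Big)^{2m-2j+1}.$$
At $x=0$ every term vanishes, since each carries a positive power of $x$ or a factor $\sin 0$. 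Adding the constant term supplies exactly the $j=0$ summand, which proves (i).

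For $p=2m-1$ the same computation gives the analogous sum with $j=1,\dots,m-1$. The last integration now lands on $\cos(2nx)/(2n)^{2m}$ with no power of $x$, so it contributes at both endpoints. The result is $(2m-1)!(-1)^m\big(1-(-1)^n\big)/(2n)^{2m}$ up to the sign bookkeeping, which matches (ii). Alternatively, I will derive the recursion $I_p=\frac{(-1)^{n-1}(\pi/2)^{p+1}}{p+1}\cdot(\ldots)-\frac{p(p-1)}{(2n)^2}I_{p-2}$ and prove (i) and (ii) by induction on $m$.

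\textbf{Part (iii).} I will use the telescoping identity
$$\frac{(-1)^{n-1}+\cos(2nx)}{\cos x}=2\sum_{k=1}^{n}(-1)^{n-k}\cos\big((2k-1)x\big).$$
It follows by induction on $n$ from $\cos((2k+1)x)+\cos((2k-1)x)=2\cos x\cos(2kx)$; the case $n=0$ is trivial since both sides vanish. This reduces (iii) to $\int_0^{\pi/2}x^{2m}\cos((2k-1)x)\,dx$. Integrating by parts as before, with $\sin((2k-1)\pi/2)=(-1)^{k-1}$ and $\cos((2k-1)\pi/2)=0$, only the $\sin$ terms survive at $\pi/2$. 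At $0$ everything vanishes, because the final term is $\sin 0$ and no stray cosine term appears for even power $2m$. This gives
$$(2m)!\sum_{j=0}^{m}\frac{(-1)^{j}(-1)^{k-1}}{(2k-1)^{2j+1}(2m-2j)!}\Big(\frac{\pi}{2}\Big)^{2m-2j}.$$
Multiplying by $2(-1)^{n-k}$ and summing over $k$, the sign becomes $(-1)^{n-1+j}$ independent of $k$, and $\sum_k(2k-1)^{-2j-1}=O_n^{(2j+1)}$. This is exactly (iii).

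\textbf{Part (iv).} Analogously,
$$\frac{1-\cos(2nx)}{\sin x}=2\sum_{k=1}^{n}\sin\big((2k-1)x\big),$$
from $\cos((2k-2)x)-\cos(2kx)=2\sin x\sin((2k-1)x)$. The task is then $\int_0^{\pi/2}x^{2m-1}\sin((2k-1)x)\,dx$. At $\pi/2$ only the $\sin$-type terms survive, carrying $(-1)^{k-1}$ and odd powers of $1/(2k-1)$ combined with even powers, so the total power is $(2k-1)^{-2j}$ with $j=1,\dots,m$. At $0$ all terms vanish: the last antiderivative term is $\pm\sin(0)/(2k-1)^{2m}$, which is zero. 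Summing over $k$ yields $\overline O_n^{(2j)}$ with sign $(-1)^{j-1}$, which is (iv).

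The main obstacle is purely the sign and index bookkeeping in the iterated integration by parts. In particular, I will need to check the boundary term at $x=0$ in (ii), which is the only place a nonvanishing lower-limit contribution arises. I will verify each formula at $m=0$ or $m=1$ as a check.
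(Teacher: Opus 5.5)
Your proposal is correct and follows the paper's route: iterated integration by parts for (i)--(ii), and for (iii)--(iv) the same finite trigonometric sums $2\sum_{k=1}^n(-1)^{n-k}\cos((2k-1)x)$ and $2\sum_{k=1}^n\sin((2k-1)x)$ (which the paper cites from Gradshteyn--Ryzhik) to reduce to integrals of $x^{p}\cos((2k-1)x)$ and $x^{p}\sin((2k-1)x)$; your sign and boundary bookkeeping, including the lower-limit term in (ii), is right. The only blemish is cosmetic: the induction for the cosine identity needs the product-to-sum instance $2\cos x\cos((2k-1)x)=\cos(2kx)+\cos((2k-2)x)$, not the one you wrote with $\cos(2kx)$ on the left.
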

\begin{proof}
Each part of this lemma follows by repeated integration by parts, which produces finite sums that can be rearranged into the stated forms. For example, consider
$$I_{m,n}=\int _0^{\frac{\pi }{2}}x^{2m}\cos (2nx)\,dx.$$
Iterating integration by parts four times and noting that all boundary terms involving $\sin (2nx)$ vanish at the endpoints, we obtain
\begin{align*}
I_{m,n}&=\left[\frac{\cos (2nx)}{(2n)^2}\left(x^{2m}\right)'\right]^{\frac{\pi }{2}}_0-\frac{1}{(2n)^2}\int _0^{\frac{\pi }{2}}\left(x^{2m}\right)''\cos (2nx)\,dx.\\
&=\left[\frac{\cos (2nx)}{(2n)^2}\left(x^{2m}\right)'-\frac{\cos (2nx)}{(2n)^4}\left(x^{2m}\right)'''\right]^{\frac{\pi }{2}}_0+\frac{1}{(2n)^4}\int _0^{\frac{\pi }{2}}\left(x^{2m}\right)^{(4)}\cos (2nx)\,dx.
\end{align*}
After $2m$ iterations, it follows that
\begin{align*}
I_{m,n}&=\sum _{j=1}^m\frac{(-1)^{j-1}}{(2n)^{2j}}\left[\cos (2nx)\left(x^{2m}\right)^{(2j-1)}\right]^{\frac{\pi }{2}}_0+\frac{(-1)^m}{(2n)^{2m}}\int _0^{\frac{\pi }{2}}\left(x^{2m}\right)^{(2m)}\cos (2nx)\,dx\\
&=(2m)!\sum _{j=1}^m\frac{(-1)^{j+n-1}}{(2n)^{2j}(2m-2j+1)!}\left(\frac{\pi }{2}\right)^{2m-2j+1}\\
&=(2m)!\sum _{j=0}^m\frac{(-1)^{j+n-1}}{(2n)^{2j}(2m-2j+1)!}\left(\frac{\pi }{2}\right)^{2m-2j+1}-\frac{(-1)^{n-1}}{2m+1}\left(\frac{\pi }{2}\right)^{2m+1}.
\end{align*}
Combining this with $\int _0^{\frac{\pi }{2}}x^{2m}\,dx=\frac{1}{2m+1}\left(\frac{\pi }{2}\right)^{2m+1}$ and rearranging gives the identity in part (i).

The remaining parts follow by the same procedure. For parts (iii) and (iv), we additionally use the identities (see \cite[1.342]{book3})
\begin{align*}
&2\sum _{k=1}^n(-1)^{n+k}\cos ((2k-1)x)=\frac{(-1)^{n-1}+\cos (2nx)}{\cos (x)},\\
&2\sum _{k=1}^n\sin ((2k-1)x)=\frac{1-\cos (2nx)}{\sin (x)}.
\end{align*}
\end{proof}

\begin{lemma} \label{lma2.2}The following identities hold:
\begin{alignat*}{2}
(\mathrm{i})&\quad&&\text{For }m\in \mathbb{Z}_{\ge 0},n\in \mathbb{Z}_{>0},\\
&&&\int _0^{\frac{\pi }{2}}x^{2m}\cos ^{2n}(x)\,dx=(2m)!\frac{\binom{2n}{n}}{4^n}\sum _{j=0}^m\frac{(-1)^j\zeta_n^{\star}(\{2\}_j)}{2^{2j}(2m-2j+1)!}\left(\frac{\pi }{2}\right)^{2m-2j+1}.\\[1ex]
(\mathrm{ii})&\quad&&\text{For }m\in \mathbb{Z}_{\ge 0},n\in \mathbb{Z}_{>0},\\
&&&\int _0^{\frac{\pi }{2}}x^{2m}\cos ^{2n-1}(x)\,dx=\frac{(2m)!}{2}\frac{4^n}{n\binom{2n}{n}}\sum _{j=0}^m\frac{(-1)^jt_n^{\star}(\{2\}_j)}{(2m-2j)!}\left(\frac{\pi }{2}\right)^{2m-2j}.\\[1ex]
(\mathrm{iii})&\quad&&\text{For }m,n\in \mathbb{Z}_{>0},\\
&&&\int _0^{\frac{\pi }{2}}x^{2m-1}\cos ^{2n}(x)\,dx\\
&&&=(2m-1)!\frac{\binom{2n}{n}}{4^n}\left(\sum _{j=0}^{m-1}\frac{(-1)^j\zeta _n^{\star }(\{2\}_j)}{2^{2j}(2m-2j)!}\left(\frac{\pi }{2}\right)^{2m-2j}+\frac{(-1)^m}{4^m}\sum _{n\ge n_1\ge \cdots \ge n_m\ge 1}\frac{4^{n_m}}{\binom{2n_m}{n_m}}\prod _{i=1}^m\frac{1}{n_i^2}\right).\\[1ex]
(\mathrm{iv})&\quad&&\text{For }m,n\in \mathbb{Z}_{>0},\\
&&&\int _0^{\frac{\pi }{2}}x^{2m-1}\sin ^{2n-1}(x)\,dx\\
&&&=\frac{(2m-1)!}{2}\frac{4^n}{n\binom{2n}{n}}\sum _{j=1}^m\frac{(-1)^{j-1}}{(2m-2j)!}\left(\frac{\pi }{2}\right)^{2m-2j}\sum _{n\ge n_1\ge \cdots \ge n_{j-1}>n_j\ge 0}\frac{\binom{2n_j}{n_j}}{(2n_j+1)4^{n_j}}\prod _{i=1}^{j-1}\frac{1}{(2n_i-1)^2}.
\end{alignat*}
\end{lemma}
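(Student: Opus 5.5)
The natural route is induction on $n$ via a recursion in $n$ obtained from integration by parts, with $m$ fixed. The key observation is that Lemma~\ref{lma2.1} already gives $\int_0^{\pi/2} x^{a}\bigl((-1)^{n-1}+\cos(2nx)\bigr)\,dx$ and the analogous integrals with $\cos(x)$ or $\sin(x)$ in the denominator. The differences $\cos^{2n}(x)-\cos^{2n-2}(x)$, and their odd-power and sine analogues, can be expressed through these kernels.

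\textbf{Parts (i) and (iii).} Set $C_n^{(a)}=\int_0^{\pi/2}x^a\cos^{2n}(x)\,dx$. Integrate by parts twice, using $(\cos^{2n}x)''=2n(2n-1)\cos^{2n-2}x-4n^2\cos^{2n}x$. This gives
\[
4n^2C_n^{(a)}=2n(2n-1)C_{n-1}^{(a)}+a(a-1)C_n^{(a-2)}+\text{boundary terms}.
\]
The boundary terms vanish for $a\ge 2$. For $a=1$ the boundary term is $-[x(\cos^{2n})']+[\cos^{2n}]$, which produces a constant. Dividing by $4n^2$ and normalizing by $\binom{2n}{n}/4^n$ (note $\frac{2n(2n-1)}{4n^2}=\frac{\binom{2n}{n}/4^n}{\binom{2n-2}{n-1}/4^{n-1}}$) yields a telescoping relation. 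Setting $D_n^{(a)}=\frac{4^n}{\binom{2n}{n}}C_n^{(a)}$, it reads
\[
D_n^{(a)}-D_{n-1}^{(a)}=\frac{a(a-1)}{4n^2}D_n^{(a-2)}
\]
for $a\ge2$. Summing over $n$ gives $D_n^{(a)}=D_0^{(a)}+\frac{a(a-1)}{4}\sum_{k\le n}D_k^{(a-2)}/k^2$. Induction on $m$ then generates exactly the star sums $\zeta^\star_n(\{2\}_j)$, since $\sum_{k\le n}\zeta_k^\star(\{2\}_{j})/k^2=\zeta_n^\star(\{2\}_{j+1})$. The base is $D_0^{(a)}=(\pi/2)^{a+1}/(a+1)$.

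In part (iii) the descent terminates at $a=1$. There the $a=1$ recursion carries the extra boundary constant $\frac{1}{4n^2}\cdot\frac{4^n}{\binom{2n}{n}}$, up to sign. This is precisely the origin of the innermost factor $4^{n_m}/\binom{2n_m}{n_m}$ in the deepest nested sum. The sign $(-1)^m/4^m$ should come out once the sign convention is tracked carefully; I would check the sign on $m=1$, $n=1$ directly.

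\textbf{Part (ii).} For $\cos^{2n-1}$, the same double integration by parts gives
\[
(2n-1)^2 E_n^{(a)}=(2n-1)(2n-2)E_{n-1}^{(a)}+a(a-1)E_n^{(a-2)},
\]
with no boundary terms for even $a\ge2$. The normalizing weights $\frac{4^n}{n\binom{2n}{n}}$ satisfy exactly the ratio $\frac{2n-2}{2n-1}$, and the telescoping produces $t_n^\star(\{2\}_j)$. The base case $m=0$ is the Wallis integral $\int_0^{\pi/2}\cos^{2n-1}x\,dx=\frac{4^n}{2n\binom{2n}{n}}$.

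\textbf{Part (iv).} This is handled the same way with $\sin^{2n-1}$, which equals $\cos^{2n-1}(\pi/2-x)$, but with odd powers $x^{2m-1}$. The boundary term at $a=1$ now involves the value $\int_0^{\pi/2}\sin^{2n-1}x\,dx$. This term feeds the innermost sum with $\binom{2n_j}{n_j}/((2n_j+1)4^{n_j})$ and the strict inequality $n_{j-1}>n_j\ge0$; the index shift from $2n_j-1$ to $2n_j+1$ reflects the Wallis ratio.

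Alternatively one could substitute Lemma~\ref{lma2.1}(iii),(iv) after writing $\cos^{2n-1}$ differences via the Dirichlet-type kernels, but the direct recursion seems cleaner. The main obstacle is the bookkeeping in (iii) and (iv): identifying the boundary contributions at the lowest odd power and matching the resulting nested sums, including index ranges and signs, to the stated forms. I would verify these on small $m$ and $n$ before writing the general induction.
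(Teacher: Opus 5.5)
Your overall strategy is the same as the paper's: a recurrence in $n$ from integrating by parts twice, normalization by the Wallis-type weight so that it telescopes, then induction on $m$. However, two steps fail as written.

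\textbf{The sign of your recurrence is wrong.} For $a\ge 2$ you have $\int_0^{\pi/2}x^a(\cos^{2n}x)''\,dx=a(a-1)C_n^{(a-2)}$, and $(\cos^{2n}x)''=2n(2n-1)\cos^{2n-2}x-4n^2\cos^{2n}x$. Together these give $4n^2C_n^{(a)}=2n(2n-1)C_{n-1}^{(a)}-a(a-1)C_n^{(a-2)}$, that is, $D_n^{(a)}-D_{n-1}^{(a)}=-\frac{a(a-1)}{4n^2}D_n^{(a-2)}$. This is exactly the paper's recurrence.

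With your plus sign the induction makes every coefficient positive. For $m=n=1$ it gives $\pi^3/48+\pi/8$, whereas $\int_0^{\pi/2}x^2\cos^2x\,dx=\pi^3/48-\pi/8$. The factors $(-1)^j$ and $(-1)^m/4^m$ come from this minus sign; in (iii) the base level is $D_n^{(1)}-D_{n-1}^{(1)}=-\frac{4^n}{4n^2\binom{2n}{n}}$.

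The same correction is needed in (ii). There the relation also holds only for $n\ge 2$, since $E_0^{(a)}$ diverges. At $n=1$ the boundary term $[x^a(\cos x)']_0^{\pi/2}=-(\pi/2)^a$ does not vanish. With $w_n=\frac{4^n}{n\binom{2n}{n}}$, the correct telescoped form is
\[
\frac{E_n^{(a)}}{w_n}=\frac12\Bigl(\frac{\pi}{2}\Bigr)^a-a(a-1)\sum_{k=1}^n\frac{E_k^{(a-2)}/w_k}{(2k-1)^2}.
\]
That constant is what supplies the $j=0$ term; the Wallis base case $m=0$ alone does not give it.

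\textbf{Part (iv) is not ``handled the same way''.} You assume a boundary contribution only at the lowest power, but because $\sin(\pi/2)=1$, the term $-a\bigl[x^{a-1}\sin^{2n-1}x\bigr]_0^{\pi/2}=-a(\pi/2)^{a-1}$ survives for every $a\ge 1$ and every $n\ge 1$. So for $S_n^{(a)}=\int_0^{\pi/2}x^a\sin^{2n-1}x\,dx$,
\[
(2n-1)^2S_n^{(a)}=(2n-1)(2n-2)S_{n-1}^{(a)}+a\Bigl(\frac{\pi}{2}\Bigr)^{a-1}-a(a-1)S_n^{(a-2)}.
\]
Divide by $w_n$ and use $\frac{n\binom{2n}{n}}{(2n-1)^2\,4^n}=\frac12\,\frac{\binom{2n-2}{n-1}}{(2n-1)4^{n-1}}$. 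The inhomogeneous term becomes $\frac a2\bigl(\frac{\pi}{2}\bigr)^{a-1}\frac{\binom{2n_j}{n_j}}{(2n_j+1)4^{n_j}}$ with $n_j=n-1$. This is where the innermost factor and the strict inequality come from.

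This term enters at every level $a=2m-1$, producing the $j=1$ term with $(\pi/2)^{2m-2}$. The deeper nested sums arise by feeding lower levels through $-a(a-1)S_n^{(a-2)}$. At $a=1$ the boundary term is just the constant $-[\sin^{2n-1}x]_0^{\pi/2}=-1$; the Wallis integral enters only through the normalization $w_n$.

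If you keep a boundary contribution only at $a=1$, every term with $j<m$ is lost. Already $\int_0^{\pi/2}x^3\sin x\,dx=3(\pi/2)^2-6$, and the recurrence without this term would give the impossible value $-6$.

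With these two corrections, your argument becomes the paper's. The paper itself only sketches parts (ii)--(iv).
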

\begin{proof}
All parts of this lemma are proved in the same manner. A recurrence relation is first obtained via integration by parts. Evaluation of the first few cases suggests the general form of the identities in terms of multiple harmonic sums. This form is then verified by induction on $m$, using the recurrence, which preserves the structure of the resulting expressions. We illustrate this strategy by proving the identity in part (i).

Let
$$I_{m,n_1}=\int _0^{\frac{\pi }{2}}x^{2m}\cos ^{2n_1}(x)\,dx.$$
Applying integration by parts yields
\begin{align*}
I_{m,n_1}&=\int _0^{\frac{\pi }{2}}x^{2m}(\sin (x))'\cos ^{2n_1-1}(x)\,dx\\
&=\frac{m}{n_1}\int _0^{\frac{\pi }{2}}x^{2m-1}\left(\cos ^{2n_1}(x)\right)'\,dx+(2n_1-1)\int _0^{\frac{\pi }{2}}x^{2m}\sin ^2(x)\cos ^{2n_1-2}(x)\,dx\\
&=-\frac{m(2m-1)}{n_1}I_{m-1,n_1}+(2n_1-1)I_{m,n_1-1}-(2n_1-1)I_{m,n_1}.
\end{align*}
From this we obtain
$$\frac{1}{2}\frac{m(2m-1)}{n_1^2}I_{m-1,n_1}=\frac{2n_1-1}{2n_1}I_{m,n_1-1}-I_{m,n_1}.$$
By multiplying both sides by $\frac{4^{n_1}}{\binom{2n_1}{n_1}}$, using the identity $\binom{2n_1}{n_1}=\frac{2\left(2n_1-1\right)}{n_1}\binom{2n_1-2}{n_1-1}$, and summing from $n_1=1$ to $n$, we further obtain
\begin{align*}
\frac{m(2m-1)}{2}\sum _{n_1=1}^n\frac{4^{n_1}}{n_1^2\binom{2n_1}{n_1}}I_{m-1,n_1}&=\sum _{n_1=1}^n\left(\frac{4^{n_1-1}}{\binom{2n_1-2}{n_1-1}}I_{m,n_1-1}-\frac{4^{n_1}}{\binom{2n_1}{n_1}}I_{m,n_1}\right)\\
&=I_{m,0}-\frac{4^n}{\binom{2n}{n}}I_{m,n}.
\end{align*}
Since $I_{m,0}=\frac{1}{2m+1}\left(\frac{\pi }{2}\right)^{2m+1}$, rearranging gives the recurrence
$$I_{m,n}=\frac{\binom{2n}{n}}{4^n}\left(\frac{1}{2m+1}\left(\frac{\pi }{2}\right)^{2m+1}-\frac{(2m)(2m-1)}{4}\sum _{n_1=1}^n\frac{4^{n_1}}{n_1^2\binom{2n_1}{n_1}}I_{m-1,n_1}\right).$$
Iterating the recurrence for the first few values of $m$ yields
\begin{align*}
I_{0,n}&=\frac{\binom{2n}{n}}{4^n}\left(\frac{\pi }{2}\right),\\
I_{1,n}&=\frac{\binom{2n}{n}}{4^n}\left(\frac{1}{3}\left(\frac{\pi }{2}\right)^3-\frac{1}{2}\left(\frac{\pi }{2}\right)\sum _{n_1=1}^n\frac{1}{n_1^2}\right),\\
I_{2,n}&=\frac{\binom{2n}{n}}{4^n}\left(\frac{1}{5}\left(\frac{\pi }{2}\right)^5-\left(\frac{\pi }{2}\right)^3\sum _{n_1=1}^n\frac{1}{n_1^2}+\frac{3}{2}\left(\frac{\pi }{2}\right)\sum _{n_1=1}^n\frac{1}{n_1^2}\sum _{n_2=1}^{n_1}\frac{1}{n_2^2}\right).
\end{align*}
These initial cases suggest the stated formula in part (i). Since the recurrence preserves this structure, the identity is verified by induction on $m$.

The remaining parts are handled by the same argument. Integration by parts yields analogous recurrences, and the identities follow by induction on $m$.
\end{proof}

\begin{lemma} \label{lma2.3}Let $k\in \mathbb{Z}_{>1}, x\in \mathbb{R}$. Then
\begin{alignat*}{2}
\left(\operatorname{i}\right)&\quad&&\operatorname{Li}_k(\cos ^2(x))=\frac{2^k}{(k-1)!}\sum _{n=1}^{\infty }\left(\int _0^1t^{n-1}\ln ^{k-1}\!\left(\frac{1+t}{2\sqrt{t}}\right)\,dt\right)\left((-1)^{n-1}+\cos (2nx)\right),\\
\left(\operatorname{ii}\right)&\quad&&\operatorname{Li}_k(\sin ^2(x))=\frac{2^k}{(k-1)!}\sum _{n=1}^{\infty }(-1)^{n-1}\left(\int _0^1t^{n-1}\ln ^{k-1}\!\left(\frac{1+t}{2\sqrt{t}}\right)\,dt\right)\left(1-\cos (2nx)\right),\\
\left(\operatorname{iii}\right)&\quad&&\chi _k(\cos (x))=\frac{2}{(k-1)!}\sum _{n=1}^{\infty }\left(\int _0^1t^{2n-2}\ln ^{k-1}\!\left(\frac{1+t^2}{2t}\right)\,dt\right)\cos ((2n-1)x),\\
\left(\operatorname{iv}\right)&\quad&&\chi _k(\sin (x))=\frac{2}{(k-1)!}\sum _{n=1}^{\infty }(-1)^{n-1}\left(\int _0^1t^{2n-2}\ln ^{k-1}\!\left(\frac{1+t^2}{2t}\right)\,dt\right)\sin ((2n-1)x).
\end{alignat*}
\end{lemma}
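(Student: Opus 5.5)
The plan is to prove part (i) directly from the standard integral representation of the polylogarithm. We sum the Fourier series under the integral sign in closed form and then make a change of variables that turns one representation into the other. Parts (ii) and (iv) then follow from (i) and (iii) by $x\mapsto \frac{\pi}{2}-x$.

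\textbf{Part (i).} Start from
$$\operatorname{Li}_k(z)=\frac{z}{(k-1)!}\int_0^1\frac{\ln^{k-1}(1/u)}{1-zu}\,du,\qquad z\in[0,1],\ k>1,$$
with $z=\cos^2(x)$.

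\emph{Summing the series.} For fixed $t\in(0,1)$, write $c=\cos(2x)$ and sum the geometric series:
$$\sum_{n\ge1}t^{n-1}\left((-1)^{n-1}+\cos(2nx)\right)=\frac{1}{1+t}+\frac{c-t}{1-2tc+t^2}.$$
Putting this over a common denominator, the numerator is $(1-t)(1+c)=2z(1-t)$.

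\emph{Change of variables.} Substitute $u=\frac{4t}{(1+t)^2}$, which maps $(0,1)$ monotonically onto $(0,1)$. This substitution gives three facts:
\begin{itemize}
\item $1-2tc+t^2=(1+t)^2-4tz=(1+t)^2(1-uz)$;
\item $\frac{du}{dt}=\frac{4(1-t)}{(1+t)^3}$;
\item $\ln\frac{1+t}{2\sqrt t}=\frac12\ln\frac1u$.
\end{itemize}
From the first two, the kernel equals $\frac{z}{2(1-uz)}\frac{du}{dt}$. From the third, $2^k\ln^{k-1}\frac{1+t}{2\sqrt t}=2\ln^{k-1}\frac1u$. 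Multiplying these, the right-hand side of (i), with sum and integral interchanged, becomes
$$\frac{z}{(k-1)!}\int_0^1\frac{\ln^{k-1}(1/u)}{1-zu}\,du=\operatorname{Li}_k(\cos^2 x),$$
as claimed.

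\textbf{Interchange of sum and integral.} This is the only delicate point. Set $f(t)=\ln^{k-1}\frac{1+t}{2\sqrt t}\ge0$.
\begin{itemize}
\item Near $t=1$ we have $\ln\frac{1+t}{2\sqrt t}\sim\frac{(1-t)^2}{8}$, so $f(t)=O((1-t)^{2k-2})$.
\item Near $t=0$, $f$ has only a logarithmic singularity, so it is integrable there.
\end{itemize}
The partial sums of the trigonometric factor are bounded in absolute value by $2\sum_n t^{n-1}f(t)=\frac{2f(t)}{1-t}$. This bound is integrable on $(0,1)$ because $k\ge2$. Dominated convergence then justifies the interchange. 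The same estimate shows that the coefficients are $O(n^{-(2k-1)})$, so the Fourier series converges absolutely and uniformly in $x$.

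\textbf{Parts (iii) and (iv).} Use
$$\chi_k(z)=\frac{z}{(k-1)!}\int_0^1\frac{\ln^{k-1}(1/u)}{1-z^2u^2}\,du,$$
which follows from $\chi_k(z)=\sum_{m\ \mathrm{odd}}z^m/m^k$ together with $\int_0^1 u^{m-1}\ln^{k-1}(1/u)\,du=(k-1)!/m^k$.

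\emph{Summing the series.} With $z=\cos x$,
$$\sum_{n\ge1}t^{2n-2}\cos((2n-1)x)=\operatorname{Re}\frac{e^{ix}}{1-t^2e^{2ix}}=\frac{(1-t^2)\cos x}{1-2t^2\cos 2x+t^4}.$$

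\emph{Change of variables.} Substitute $u=\frac{2t}{1+t^2}$. Then:
\begin{itemize}
\item $1-2t^2\cos2x+t^4=(1+t^2)^2(1-u^2z^2)$;
\item $\frac{du}{dt}=\frac{2(1-t^2)}{(1+t^2)^2}$;
\item $\ln\frac{1+t^2}{2t}=\ln\frac1u$.
\end{itemize}
Hence the kernel equals $\frac{z}{2(1-u^2z^2)}\frac{du}{dt}$, and the prefactor $\frac{2}{(k-1)!}$ reproduces $\chi_k(\cos x)$. The interchange is justified exactly as in (i): here $\ln\frac{1+t^2}{2t}=O((1-t)^2)$ near $t=1$, and the partial-sum bound is $\frac{2f(t)}{1-t^2}$, which is again integrable.

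\textbf{Parts (ii) and (iv).} Replace $x$ by $\frac{\pi}{2}-x$ in (i) and (iii). For (ii), $\cos(2n(\frac\pi2-x))=(-1)^n\cos 2nx$, so $(-1)^{n-1}+(-1)^n\cos 2nx=(-1)^{n-1}(1-\cos2nx)$. For (iv), $\cos((2n-1)(\frac\pi2-x))=(-1)^{n-1}\sin((2n-1)x)$.
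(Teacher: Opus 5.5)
Your proof is correct and is essentially the paper's argument. The paper writes each coefficient as $\int_0^\infty \ln^{k-1}(\cosh y)\,e^{-2ny}\,dy$ (resp.\ with $e^{-(2n-1)y}$), sums the trigonometric series in closed form, substitutes $\cosh y=1/t$ (followed, in part (i), by an implicit squaring), and finishes with the integral representation (2.1). Your single substitution $u=\frac{4t}{(1+t)^2}$ (resp.\ $u=\frac{2t}{1+t^2}$) is exactly the composite of those changes of variables, and it ends at the same representation of $\operatorname{Li}_k$ (and of $\chi_k$). The one genuine addition on your side is the dominated-convergence justification of the sum--integral interchange, which the paper leaves implicit.
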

\begin{proof}
The approach employed in this lemma follows V\u{a}lean's strategy in \cite[Sect.~6.49]{book1}. For part (i), we consider the identity (see \cite[1.461.2]{book3})
$$\sum _{n=1}^{\infty }\left((-1)^{n-1}+\cos (2nx)\right)e^{-2ny}=-\frac{\cos ^2(x)\tanh (y)}{\cos (2x)-\cosh (2y)}.$$
On the other hand, by substituting $t=e^{-2y}$, we have
$$\frac{1}{2}\int _0^1t^{n-1}\ln ^{k-1}\!\left(\frac{1+t}{2\sqrt{t}}\right)\,dt=\int _0^{\infty }\ln ^{k-1}(\cosh (y))e^{-2ny}\,dy.$$
Therefore, multiplying both sides by $(-1)^{n-1}+\cos (2nx)$ and summing over $n\ge 1$ gives
\begin{align*}
\frac{1}{2}&\sum _{n=1}^{\infty }\left(\int _0^1t^{n-1}\ln ^{k-1}\!\left(\frac{1+t}{2\sqrt{t}}\right)\,dt\right)\left((-1)^{n-1}+\cos (2nx)\right)\\
&=\int _0^{\infty }\ln ^{k-1}\!\left(\cosh (y)\right)\left(\sum _{n=1}^{\infty }\left((-1)^{n-1}+\cos (2nx)\right)e^{-2ny}\right)\,dy\\
&=-\int _0^{\infty }\frac{\cos ^2(x)\tanh (y)\ln ^{k-1}\!\left(\cosh (y)\right)}{\cos (2x)-\cosh (2y)}\,dy\\
&\overset{\cosh (y)=\frac{1}{t}}{=}\frac{(-1)^{k-1}}{2^{k+1}}\int _0^1\frac{\cos ^2(x)\ln ^{k-1}(t)}{1-\cos ^2(x)t}\,dt.
\end{align*}
By using the identity (see \cite[Sect.~1.6]{book5})
\begin{equation*}
\int _0^1\frac{x\ln ^n(t)}{1-xt}\,dt=(-1)^nn!\operatorname{Li}_{n+1}(x),\label{eq2.1}\tag{2.1}
\end{equation*}
with $x\mapsto \cos ^2(x)$ and $n\mapsto k-1$, the result in part (i) follows. The identity in part (ii) then follows from part (i) by the substitution $x\mapsto \frac{\pi}{2}-x$.

For part (iii), we proceed similarly. Using the identity (see \cite[1.461.2]{book3})
$$\sum _{n=1}^{\infty }\cos ((2n-1)x)e^{-(2n-1)y}=-\frac{\cos (x)\sinh (y)}{\cos (2x)-\cosh (2y)},$$
together with
$$\int _0^1t^{2n-2}\ln ^{k-1}\!\left(\frac{1+t^2}{2t}\right)\,dt\overset{t=e^{-y}}{=}\int _0^{\infty }\ln ^{k-1}(\cosh (y))e^{-(2n-1)y}\,dy,$$
we deduce
\begin{align*}
&\sum _{n=1}^{\infty }\left(\int _0^1t^{2n-2}\ln ^{k-1}\!\left(\frac{1+t^2}{2t}\right)\,dt\right)\cos ((2n-1)x)\\
&=\int _0^{\infty }\ln ^{k-1}(\cosh (y))\left(\sum _{n=1}^{\infty }\cos ((2n-1)x)e^{-(2n-1)y}\right)\,dy\\
&=-\int _0^{\infty }\frac{\cos (x)\sinh (y)\ln ^{k-1}(\cosh (y))}{\cos (2x)-\cosh (2y)}\,dy\\
&\overset{\cosh (y)=\frac{1}{t}}{=}\frac{(-1)^{k-1}}{2}\int _0^1\frac{\cos (x)\ln ^{k-1}(t)}{1-\cos ^2(x)t^2}\,dt.
\end{align*}
By expanding the resulting integral and employing \eqref{eq2.1} and $\chi _s(x)=\frac{1}{2}\left(\operatorname{Li}_s(x)-\operatorname{Li}_s(-x)\right)$, we obtain the result in part (iii). Substituting $x\mapsto \frac{\pi}{2}-x$ in part (iii) gives part (iv).
\end{proof}

\section{Main Results}
In this section, we present the main integral representations for the multiple Ap\'ery‑like series introduced above, together with their consequences. Theorems \ref{thm3.1}, \ref{thm3.4}, \ref{thm3.6}, \ref{thm3.8}, \ref{thm3.9}, and \ref{thm3.11} establish these representations, while Corollaries \ref{corollary3.2} and \ref{corollary3.7} recover the results of Gen\v{c}ev and Rucki introduced earlier. Proposition \ref{prop3.3} provides a new evaluation, and Proposition \ref{prop3.5} and Corollaries \ref{corollary3.10} and \ref{corollary3.12} recover additional known identities.

\begin{theorem} \label{thm3.1}Let $j\in \mathbb{Z}_{\ge 0}, k\in \mathbb{Z}_{>0}$. Then
$$\sum _{n=1}^{\infty }\frac{\binom{2n}{n}}{n^k4^n}\zeta _n^{\star }(\{2\}_j)=-\frac{2^k}{(k-1)!}\int _0^1\frac{\ln ^{k-1}\!\left(\frac{1+t}{2\sqrt{t}}\right)\operatorname{Li}_{2j}(-t)}{t}\,dt.$$
\end{theorem}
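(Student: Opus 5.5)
The plan is to compute one family of integrals in two ways and compare. For $m\in\mathbb{Z}_{\ge 0}$ consider
$$J_m\coloneqq\int_0^{\frac{\pi}{2}}x^{2m}\operatorname{Li}_k(\cos^2(x))\,dx .$$

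\emph{First evaluation.} Expand $\operatorname{Li}_k(\cos^2(x))=\sum_{n\ge1}\cos^{2n}(x)/n^k$, integrate termwise, and apply Lemma \ref{lma2.2}(i). This gives
$$J_m=(2m)!\sum_{j=0}^m\frac{(-1)^jA_j}{4^j(2m-2j+1)!}\left(\frac{\pi}{2}\right)^{2m-2j+1},\qquad A_j\coloneqq\sum_{n=1}^{\infty}\frac{\binom{2n}{n}}{n^k4^n}\zeta_n^{\star}(\{2\}_j).$$
Here $A_j$ converges because $\binom{2n}{n}4^{-n}\sim(\pi n)^{-1/2}$ and $\zeta_n^{\star}(\{2\}_j)\le\zeta^{\star}(\{2\}_j)<\infty$. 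The termwise integration is justified by monotone convergence, since all terms are nonnegative.

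\emph{Second evaluation.} Insert the Fourier expansion of Lemma \ref{lma2.3}(i) and integrate termwise with Lemma \ref{lma2.1}(i). Write $c_n\coloneqq\int_0^1t^{n-1}\ln^{k-1}\!\left(\frac{1+t}{2\sqrt t}\right)dt$ and use $(2n)^{-2j}=4^{-j}n^{-2j}$. This produces exactly the same shape,
$$J_m=(2m)!\sum_{j=0}^m\frac{(-1)^jB_j}{4^j(2m-2j+1)!}\left(\frac{\pi}{2}\right)^{2m-2j+1},\qquad B_j\coloneqq\frac{2^k}{(k-1)!}\sum_{n=1}^{\infty}\frac{(-1)^{n-1}c_n}{n^{2j}}.$$

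\emph{Comparison.} Equate the two expressions for every $m\ge0$. Then $\sum_{j=0}^m(-1)^j(A_j-B_j)\,4^{-j}(\pi/2)^{2m-2j+1}/(2m-2j+1)!=0$ for all $m$. The term with $j=m$ has the nonzero factor $(-1)^m4^{-m}\frac{\pi}{2}$, so induction on $m$ gives $A_m=B_m$ for every $m$; in particular $A_j=B_j$. Finally, interchange sum and integral in $B_j$; the convergence is dominated, since $|{\operatorname{Li}_{2j}(-t)}|$ is bounded on $[0,1]$ for $j\ge1$ and the $j=0$ case is $-t/(1+t)$. Using $\sum_{n\ge1}(-1)^{n-1}t^{n-1}n^{-2j}=-\operatorname{Li}_{2j}(-t)/t$, this gives
$$B_j=-\frac{2^k}{(k-1)!}\int_0^1\frac{\ln^{k-1}\!\left(\frac{1+t}{2\sqrt t}\right)\operatorname{Li}_{2j}(-t)}{t}\,dt,$$
which is the claimed identity.

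\emph{The case $k=1$.} Lemma \ref{lma2.3} is stated only for $k>1$, so this case needs a separate check. Here $c_n=1/n$, and the needed expansion $-\ln(\sin^2 x)=2\sum_{n\ge1}\frac{1}{n}\left((-1)^{n-1}+\cos(2nx)\right)$ follows from the classical series $-\ln(2\sin x)=\sum_{n\ge1}\cos(2nx)/n$ together with $\sum_{n\ge1}(-1)^{n-1}/n=\ln 2$. So the same argument goes through.

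\emph{Main obstacle.} The delicate step is the termwise integration of the Fourier series against $x^{2m}$ in the second evaluation. For $k\ge2$, the coefficients satisfy $c_n=O(n^{-2}\cdot\text{polylog})$: the integrand vanishes to order $(1-t)^{2(k-1)}$ at $t=1$, and the singularity at $t=0$ is only logarithmic. Hence the series converges absolutely and uniformly, and the interchange is immediate. For $k=1$ the series converges only conditionally and $\ln\sin x$ is singular at $x=0$. I would handle this either through $L^2$ convergence of Fourier series, since $\ln\sin\in L^2(0,\pi/2)$ and $x^{2m}\in L^2$, or by inserting the Abel factor $e^{-2ny}$ and letting $y\to0^+$, as in the proof of Lemma \ref{lma2.3}.
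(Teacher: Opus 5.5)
Your proposal is correct and follows the paper's proof. Both arguments compute $\int_0^{\pi/2}x^{2m}\operatorname{Li}_k(\cos^2(x))\,dx$ once via Lemma~\ref{lma2.2}(i) and once via the Fourier expansion of Lemma~\ref{lma2.3}(i) with Lemma~\ref{lma2.1}(i), then invert the resulting lower triangular system in $m$. Your added convergence justifications and your separate check of $k=1$ are useful but do not change the route; the paper applies Lemma~\ref{lma2.3} at $k=1$ only implicitly (e.g.\ for Corollary~\ref{corollary3.2}), although that lemma is stated only for $k>1$.
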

\begin{proof}
We begin by noting that
$$\sum _{n=1}^{\infty }\frac{1}{n^k}\int _0^{\frac{\pi }{2}}x^{2m}\cos ^{2n}(x)\,dx=\int _0^{\frac{\pi }{2}}x^{2m}\operatorname{Li}_k(\cos ^2(x))\,dx.$$
Applying part (i) of Lemmas \ref{lma2.2} and \ref{lma2.3} yields
\begin{align*}
&(2m)!\sum _{n=1}^{\infty }\frac{\binom{2n}{n}}{n^k4^n}\sum _{j=0}^m\frac{(-1)^j\zeta _n^{\star }(\{2\}_j)}{2^{2j}(2m-2j+1)!}\left(\frac{\pi }{2}\right)^{2m-2j+1}\\
&=\frac{2^k}{(k-1)!}\sum _{n=1}^{\infty }\left(\int _0^1t^{n-1}\ln ^{k-1}\!\left(\frac{1+t}{2\sqrt{t}}\right)\,dt\right)\int _0^{\frac{\pi }{2}}x^{2m}\left((-1)^{n-1}+\cos (2nx)\right)\,dx.
\end{align*}
By part (i) of Lemma \ref{lma2.1}, we obtain
\begin{align*}
&(2m)!\sum _{j=0}^m\frac{(-1)^j}{2^{2j}(2m-2j+1)!}\left(\frac{\pi }{2}\right)^{2m-2j+1}\sum _{n=1}^{\infty }\frac{\binom{2n}{n}}{n^k4^n}\zeta _n^{\star }(\{2\}_j)\\
&=(2m)!\frac{2^k}{(k-1)!}\sum _{n=1}^{\infty }\left(\int _0^1t^{n-1}\ln ^{k-1}\!\left(\frac{1+t}{2\sqrt{t}}\right)\,dt\right)\sum _{j=0}^m\frac{(-1)^{j+n-1}}{(2n)^{2j}(2m-2j+1)!}\left(\frac{\pi }{2}\right)^{2m-2j+1}\\
&=(2m)!\sum _{j=0}^m\frac{(-1)^{j-1}}{2^{2j}(2m-2j+1)!}\left(\frac{\pi }{2}\right)^{2m-2j+1}\frac{2^k}{(k-1)!}\int _0^1\left(\sum _{n=1}^{\infty }\frac{(-t)^n}{n^{2j}}\right)\frac{\ln ^{k-1}\!\left(\frac{1+t}{2\sqrt{t}}\right)}{t}\,dt.
\end{align*}
Therefore,
\begin{align*}
&\sum _{j=0}^m\frac{(-1)^j}{2^{2j}(2m-2j+1)!}\left(\frac{\pi }{2}\right)^{2m-2j+1}\left(\sum _{n=1}^{\infty }\frac{\binom{2n}{n}}{n^k4^n}\zeta _n^{\star }(\{2\}_j)\right)\\
&=\sum _{j=0}^m\frac{(-1)^j}{2^{2j}(2m-2j+1)!}\left(\frac{\pi }{2}\right)^{2m-2j+1}\left(-\frac{2^k}{(k-1)!}\int _0^1\frac{\ln ^{k-1}\!\left(\frac{1+t}{2\sqrt{t}}\right)\operatorname{Li}_{2j}(-t)}{t}\,dt\right).
\end{align*}
Since this yields a finite lower triangular system with nonzero diagonal entries, it is invertible, and comparison of coefficients gives the desired identity.
\end{proof}

\begin{corollary} \label{corollary3.2}Let $j\in \mathbb{Z}_{\ge 0}$. Then
$$\sum _{n=1}^{\infty }\frac{\binom{2n}{n}}{n\,4^n}\zeta _n^{\star }(\{2\}_j)=2\,\eta (2j+1).$$
\end{corollary}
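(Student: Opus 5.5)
Specialize Theorem \ref{thm3.1} to $k=1$. Then $\frac{2^k}{(k-1)!}=2$ and $\ln^{0}\!\left(\frac{1+t}{2\sqrt{t}}\right)=1$, so the theorem reads
$$\sum_{n=1}^{\infty}\frac{\binom{2n}{n}}{n\,4^n}\zeta_n^{\star}(\{2\}_j)=-2\int_0^1\frac{\operatorname{Li}_{2j}(-t)}{t}\,dt.$$
It remains to evaluate the right-hand side.

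For $j\ge 1$, the integral relation $\operatorname{Li}_s(x)=\int_0^x\frac{\operatorname{Li}_{s-1}(u)}{u}\,du$ applies. After the substitution $u=-t$,
$$\int_0^1\frac{\operatorname{Li}_{2j}(-t)}{t}\,dt=\operatorname{Li}_{2j+1}(-1)=-\eta(2j+1),$$
which gives $2\,\eta(2j+1)$. Equivalently, one can expand $\operatorname{Li}_{2j}(-t)=\sum_{n\ge1}(-1)^n t^n/n^{2j}$ and integrate termwise. This is justified by dominated convergence, since $\sum_n t^{n-1}/n^{2j}$ is integrable on $[0,1]$ for $j\ge1$.

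The case $j=0$ is the only point needing care. Here $\operatorname{Li}_0(-t)=-t/(1+t)$, so
$$-2\int_0^1\frac{\operatorname{Li}_0(-t)}{t}\,dt=2\int_0^1\frac{dt}{1+t}=2\ln 2=2\,\eta(1).$$
This agrees with the formula, provided $\eta(1)=\ln 2$ is read through the alternating series, which converges.

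The main obstacle is whether Theorem \ref{thm3.1} is genuinely valid at $k=1$, since Lemma \ref{lma2.3} is stated only for $k>1$. At $k=1$ the Fourier coefficients $2\int_0^1 t^{n-1}\,dt=2/n$ give $\operatorname{Li}_1(\cos^2 x)=-\ln(\sin^2 x)=2\sum_{n\ge1}\frac{(-1)^{n-1}+\cos(2nx)}{n}$. This is the classical expansion $-\ln(2\sin x)=\sum_{n\ge1}\cos(2nx)/n$ plus $\ln 2$, so the identity still holds pointwise on $(0,\pi/2]$.

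I would therefore check that the interchanges in the proof of Theorem \ref{thm3.1} survive at $k=1$. Integrating against $x^{2m}$ over $[0,\pi/2]$ is fine: $\ln\sin x$ is integrable, and the partial sums of the cosine series are bounded by an integrable multiple of $|\ln x|$ plus a constant. Termwise integration is therefore justified, for example by Abel summation or by integrating the known $L^2$-convergent series. With that in place, the $k=1$ case of the theorem, and hence the corollary, follows.
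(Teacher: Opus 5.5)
Your proposal is correct and follows the paper's proof: set $k=1$ in Theorem \ref{thm3.1} and evaluate $-2\int_0^1\frac{\operatorname{Li}_{2j}(-t)}{t}\,dt=-2\operatorname{Li}_{2j+1}(-1)=2\,\eta(2j+1)$. You also check that Lemma \ref{lma2.3}(i) and the interchanges in Theorem \ref{thm3.1} still hold at $k=1$, where the lemma is only stated for $k>1$; this is a legitimate point the paper leaves implicit. Your separate $j=0$ case is harmless but not needed, since $\operatorname{Li}_1(-1)=\int_0^{-1}\frac{\operatorname{Li}_0(u)}{u}\,du=-\ln 2$ is already covered by the same integral relation.
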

\begin{proof}
Taking $k=1$ in Theorem \ref{thm3.1} yields
$$\sum _{n=1}^{\infty }\frac{\binom{2n}{n}}{n\,4^n}\zeta _n^{\star }(\{2\}_j)=-2\int _0^1\frac{\operatorname{Li}_{2j}(-t)}{t}\,dt=-2\operatorname{Li}_{2j+1}(-1).$$
Since $\operatorname{Li}_s(-1)=-\eta (s)$, the claimed result follows.
\end{proof}

\begin{proposition} \label{prop3.3}Let $j\in \mathbb{Z}_{\ge 0}$. Then
$$\sum _{n=1}^{\infty }\frac{\binom{2n}{n}}{n^24^n}\zeta _n^{\star }(\{2\}_j)=2\sum _{k=0}^{2j+2}(-1)^k\eta (k)\eta (2j-k+2),$$
where we use the convention $\eta (0)=\frac{1}{2}$.
\end{proposition}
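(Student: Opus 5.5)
The plan is to specialize Theorem \ref{thm3.1} to $k=2$ and evaluate the resulting one-dimensional integral in closed form. With $k=2$ the prefactor is $2^2/1!=4$, and $\ln\!\left(\frac{1+t}{2\sqrt{t}}\right)=\ln(1+t)-\ln 2-\tfrac12\ln t$. This gives
$$\sum_{n=1}^{\infty}\frac{\binom{2n}{n}}{n^24^n}\zeta_n^{\star}(\{2\}_j)=-4\int_0^1\frac{\ln(1+t)\operatorname{Li}_{2j}(-t)}{t}\,dt+4\ln 2\int_0^1\frac{\operatorname{Li}_{2j}(-t)}{t}\,dt+2\int_0^1\frac{\ln t\,\operatorname{Li}_{2j}(-t)}{t}\,dt .$$
Each integral converges at both endpoints.

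The two easy pieces come first.

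\begin{itemize}
\item The integral relation $\operatorname{Li}_s(x)=\int_0^x\operatorname{Li}_{s-1}(t)\,t^{-1}\,dt$ gives $\int_0^1\operatorname{Li}_{2j}(-t)\,t^{-1}\,dt=\operatorname{Li}_{2j+1}(-1)=-\eta(2j+1)$.
\item Expanding $\operatorname{Li}_{2j}(-t)$ termwise and using $\int_0^1 t^{n-1}\ln t\,dt=-1/n^2$ gives $\int_0^1\ln t\,\operatorname{Li}_{2j}(-t)\,t^{-1}\,dt=\eta(2j+2)$.
\end{itemize}

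Since $\eta(1)=\ln 2$ and $\eta(0)=\tfrac12$ by the stated convention, these contribute $-4\eta(1)\eta(2j+1)+4\eta(0)\eta(2j+2)$. These should be exactly the $k=1$ and $k=0$ terms of $2\sum_k(-1)^k\eta(k)\eta(2j+2-k)$, together with their mirror partners $k=2j+1$ and $k=2j+2$.

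The main step, and the one I expect to be the real obstacle, is the product integral. Write $\ln(1+t)=-\operatorname{Li}_1(-t)$ and set $J_p=\int_0^1\operatorname{Li}_p(-t)\operatorname{Li}_{2j+1-p}(-t)\,t^{-1}\,dt$ for $1\le p\le 2j$, so the target is $J_1$. Integrating by parts with $d\operatorname{Li}_{p+1}(-t)=\operatorname{Li}_p(-t)\,dt/t$ gives the recurrence
$$J_p=\operatorname{Li}_{p+1}(-1)\operatorname{Li}_{2j+1-p}(-1)-J_{p+1}.$$
One must check the boundary term at $t=0$ vanishes, which holds because both factors are $O(t)$. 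Iterating from $p=1$ to $p=2j$ expresses $J_1$ as an alternating sum of products $\eta(p+1)\eta(2j+1-p)$ plus $(-1)^{2j-1}J_{2j}$.

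The weight $2j+1$ is odd, so the number of steps is odd. By symmetry $J_{2j}=J_1$, so the leftover is $-J_1$. This yields $2J_1=\sum_{p=1}^{2j-1}(-1)^{p-1}\eta(p+1)\eta(2j+1-p)$, where the signs of $\operatorname{Li}_s(-1)=-\eta(s)$ cancel in pairs. This symmetry trick replaces any attempt to evaluate $\int\operatorname{Li}_{2j+1}(-t)/(1+t)\,dt$ directly, which would not close.

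Finally I would reindex with $k=p+1$ and multiply by $-4$, giving $-2\sum_{k=2}^{2j}(-1)^k\eta(k)\eta(2j+2-k)$ after fixing the sign. I would then merge this with the boundary contributions from the first step. Since the summand of $\sum_{k=0}^{2j+2}(-1)^k\eta(k)\eta(2j+2-k)$ is symmetric under $k\mapsto 2j+2-k$, the terms $k\in\{0,1,2j+1,2j+2\}$ pair up to give exactly $2\cdot 2\eta(0)\eta(2j+2)$ and $-2\cdot 2\eta(1)\eta(2j+1)$. The last task is careful bookkeeping of the overall signs, which I would check against the case $j=0$. There $\zeta_n^{\star}(\emptyset)=1$, and the claim reduces to $\sum\binom{2n}{n}/(n^24^n)=2(2\eta(0)\eta(2)-\eta(1)^2)=\zeta(2)-2\ln^22$. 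This is the known value.
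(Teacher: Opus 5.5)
Your route is the paper's own. You specialize Theorem~3.1 at $k=2$, split $\ln\!\left(\frac{1+t}{2\sqrt{t}}\right)$, and evaluate the elementary pieces as $2\eta(2j+2)-4\ln(2)\,\eta(2j+1)$. You then close the chain of integrations by parts with the symmetry $J_{2j}=J_1$; the paper also rearranges once the original integral reappears. Your recurrence, the vanishing boundary terms, and
$2J_1=\sum_{p=1}^{2j-1}(-1)^{p-1}\eta(p+1)\eta(2j+1-p)$
are all correct.

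There is a sign error in the final assembly, and the check you propose cannot detect it.

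\emph{The sign.} Since $\ln(1+t)=-\operatorname{Li}_1(-t)$, the integral you need is $\int_0^1\ln(1+t)\operatorname{Li}_{2j}(-t)\,\frac{dt}{t}=-J_1$, not $J_1$. Its contribution to the series is therefore $-4\cdot(-J_1)=4J_1=+2\sum_{k=2}^{2j}(-1)^k\eta(k)\eta(2j+2-k)$, not the $-2\sum_{k=2}^{2j}$ you wrote. With your sign, merging with $4\eta(0)\eta(2j+2)-4\eta(1)\eta(2j+1)$ does not give the stated right-hand side. For $j=1$ you would get $2\eta(4)-4\eta(1)\eta(3)-2\eta(2)^2\approx-1.96$. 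The series is positive, and the correct value is $2\eta(4)-4\eta(1)\eta(3)+2\eta(2)^2\approx0.747$.

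\emph{Why $j=0$ misses it.} At $j=0$ the range $1\le p\le 2j$ is empty, so the middle sum never appears; test $j=1$ instead.

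\emph{The case $j=0$ itself.} Your general argument only works for $j\ge1$. At $j=0$ the recurrence is vacuous, and the pairing of $k\in\{0,1,2j+1,2j+2\}$ degenerates because $k=1$ is its own mirror. Handle it directly:
$\int_0^1\ln(1+t)\operatorname{Li}_0(-t)\,\frac{dt}{t}=-\int_0^1\frac{\ln(1+t)}{1+t}\,dt=-\tfrac12\ln^2 2$,
which gives $\zeta(2)-2\ln^2 2$. Citing this known value, as you do, also works. The paper's iteration count $2j-2$ in (3.2) likewise tacitly assumes $j\ge1$.

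With the sign corrected and $j=0$ treated separately, the proof is complete.
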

\begin{proof}
Setting $k=2$ in Theorem \ref{thm3.1}, it follows that
\begin{equation*}
\begin{aligned}
\sum _{n=1}^{\infty }\frac{\binom{2n}{n}}{n^24^n}\zeta _n^{\star }(\{2\}_j)&=-4\int _0^1\frac{\ln \!\left(\frac{1+t}{2\sqrt{t}}\right)\operatorname{Li}_{2j}(-t)}{t}\,dt\\
&=-4\int _0^1\frac{\ln (1+t)\operatorname{Li}_{2j}(-t)}{t}\,dt+2\int _0^1\frac{\ln (t)\operatorname{Li}_{2j}(-t)}{t}\,dt+4\ln (2)\int _0^1\frac{\operatorname{Li}_{2j}(-t)}{t}\,dt\\
&=-4\int _0^1\frac{\ln (1+t)\operatorname{Li}_{2j}(-t)}{t}\,dt+2\,\eta (2j+2)-4\ln (2)\eta (2j+1).
\end{aligned}\label{eq3.1}\tag{3.1}
\end{equation*}
Applying integration by parts to the remaining integral, we obtain
$$\int _0^1\frac{\ln (1+t)\operatorname{Li}_{2j}(-t)}{t}\,dt=-\eta (2)\eta (2j)+\int _0^1\frac{\operatorname{Li}_2(-t)\operatorname{Li}_{2j-1}(-t)}{t}\,dt.$$
Iterating this $2j-2$ times and using $\operatorname{Li}_1(-t)=-\ln (1+t)$ yields
\begin{equation*}
\begin{aligned}
\int _0^1\frac{\ln (1+t)\operatorname{Li}_{2j}(-t)}{t}\,dt&=\sum _{k=0}^{2j-2}(-1)^{k-1}\eta (k+2)\eta (2j-k)-\int _0^1\frac{\operatorname{Li}_{2j}(-t)\ln (1+t)}{t}\,dt\\
&=\frac{1}{2}\sum _{k=2}^{2j}(-1)^{k-1}\eta (k)\eta (2j-k+2)\\
&=\frac{1}{2}\sum _{k=0}^{2j+2}(-1)^{k-1}\eta (k)\eta (2j-k+2)+\frac{1}{2}\,\eta (2j+2)-\ln (2)\eta (2j+1).
\end{aligned}\label{eq3.2}\tag{3.2}
\end{equation*}
Substituting \eqref{eq3.2} into \eqref{eq3.1} completes the proof.
\end{proof}

\begin{theorem} \label{thm3.4}Let $j\in \mathbb{Z}_{\ge 0}, k\in \mathbb{Z}_{>0}$. Then
$$\sum _{n=1}^{\infty }\frac{4^n}{n^{k+1}\binom{2n}{n}}t_n^{\star }(\{2\}_j)=\frac{2^{k+3}}{(k-1)!}\int _0^1\frac{\ln ^{k-1}\!\left(\frac{1+t^2}{2t}\right)\operatorname{Ti}_{2j+1}(t)}{1+t^2}\,dt.$$
\end{theorem}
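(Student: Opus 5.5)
The plan is to mirror the proof of Theorem \ref{thm3.1}, with $\operatorname{Li}_k(\cos^2(x))/\cos(x)$ in place of $\operatorname{Li}_k(\cos^2(x))$. This lets us use part (ii) of Lemma \ref{lma2.2} and part (iii) of Lemma \ref{lma2.1}.

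\textbf{Step 1: the left-hand side.} Since $\operatorname{Li}_k(\cos^2(x))/\cos(x)=\sum_{n\ge1}\cos^{2n-1}(x)/n^k$, we start from
$$\sum_{n=1}^{\infty}\frac{1}{n^k}\int_0^{\frac{\pi}{2}}x^{2m}\cos^{2n-1}(x)\,dx=\int_0^{\frac{\pi}{2}}x^{2m}\frac{\operatorname{Li}_k(\cos^2(x))}{\cos(x)}\,dx .$$
The integrand on the right is bounded near $x=\frac{\pi}{2}$, since $\operatorname{Li}_k(\cos^2 x)\sim\cos^2 x$. By Lemma \ref{lma2.2}(ii) the left side equals
$$\frac{(2m)!}{2}\sum_{j=0}^m\frac{(-1)^j}{(2m-2j)!}\left(\frac{\pi}{2}\right)^{2m-2j}\sum_{n=1}^\infty\frac{4^n\,t_n^{\star}(\{2\}_j)}{n^{k+1}\binom{2n}{n}} .$$

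\textbf{Step 2: the right-hand side.} Divide the Fourier expansion of Lemma \ref{lma2.3}(i) by $\cos(x)$, integrate termwise against $x^{2m}$, and apply Lemma \ref{lma2.1}(iii). This gives
$$\frac{2^{k+1}(2m)!}{(k-1)!}\sum_{j=0}^m\frac{(-1)^j}{(2m-2j)!}\left(\frac{\pi}{2}\right)^{2m-2j}\sum_{n=1}^\infty(-1)^{n-1}O_n^{(2j+1)}\int_0^1t^{n-1}\ln^{k-1}\!\left(\tfrac{1+t}{2\sqrt t}\right)dt .$$

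\textbf{Step 3: compare coefficients.} As in Theorem \ref{thm3.1}, both sides are lower triangular in $j$ with nonzero diagonal, so the system can be inverted. We obtain
$$\sum_{n=1}^\infty\frac{4^n t_n^{\star}(\{2\}_j)}{n^{k+1}\binom{2n}{n}}=\frac{2^{k+2}}{(k-1)!}\int_0^1\ln^{k-1}\!\left(\tfrac{1+t}{2\sqrt t}\right)\sum_{n=1}^\infty(-1)^{n-1}t^{n-1}O_n^{(2j+1)}\,dt .$$

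\textbf{Step 4: evaluate the inner sum.} Swap the order of summation in the odd harmonic number:
$$\sum_{n\ge1}(-t)^{n-1}\sum_{i=1}^n\frac{1}{(2i-1)^{2j+1}}=\frac{1}{1+t}\sum_{i\ge1}\frac{(-t)^{i-1}}{(2i-1)^{2j+1}}=\frac{\operatorname{Ti}_{2j+1}(\sqrt t)}{\sqrt t\,(1+t)} .$$
Substituting $t=u^2$, so that $dt/\sqrt t=2\,du$, turns the integral into
$$2\int_0^1\frac{\ln^{k-1}\!\left(\frac{1+u^2}{2u}\right)\operatorname{Ti}_{2j+1}(u)}{1+u^2}\,du .$$
Together with the factor $2^{k+2}/(k-1)!$ this gives exactly the constant $2^{k+3}/(k-1)!$ in the statement.

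\textbf{Main obstacle.} The delicate part is justifying the interchanges of sum and integral. Termwise integration of the Fourier series after dividing by $\cos(x)$ needs control near $x=\frac{\pi}{2}$. Each term $((-1)^{n-1}+\cos 2nx)/\cos x$ is a finite cosine sum by the identity used in Lemma \ref{lma2.1}, so it is harmless individually. Uniform control comes from the convergence of $\sum_n O_n^{(2j+1)}a_n$ with $a_n=O(1/n)$ up to logarithms, combined with the alternating sign and Abel summation.

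A second issue is that Lemma \ref{lma2.3} is stated only for $k>1$. For $k=1$ I would check directly that
$$-\ln(\sin^2 x)=2\sum_{n\ge1}\frac{1}{n}\left((-1)^{n-1}+\cos(2nx)\right),$$
which is the $k=1$ instance of the formula with $\ln^0=1$ and $\int_0^1 t^{n-1}\,dt=1/n$. The argument then goes through unchanged.
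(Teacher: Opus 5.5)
Your proposal is correct and follows the paper's proof essentially step for step: Lemma \ref{lma2.2}(ii) on the left; the expansion of Lemma \ref{lma2.3}(i) divided by $\cos(x)$, together with Lemma \ref{lma2.1}(iii), on the right; the generating function $\sum_{n\ge1}(-1)^{n-1}O_n^{(2j+1)}t^{2n-1}=\operatorname{Ti}_{2j+1}(t)/(1+t^2)$; and inversion of the triangular system in $j$. The differences are cosmetic: you invert the system before summing the generating function and substitute $t=u^2$ at the end rather than the start. Your explicit check of the $k=1$ case of Lemma \ref{lma2.3} is a welcome addition, since the paper uses that case tacitly even though the lemma is stated only for $k>1$.
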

\begin{proof}
Applying part (ii) of Lemma \ref{lma2.2} and part (i) of Lemma \ref{lma2.3} to
$$\sum _{n=1}^{\infty }\frac{1}{n^k}\int _0^{\frac{\pi }{2}}x^{2m}\cos ^{2n-1}(x)\,dx=\int _0^{\frac{\pi }{2}}x^{2m}\frac{\operatorname{Li}_k(\cos ^2(x))}{\cos (x)}\,dx,$$
we obtain
\begin{align*}
&\frac{(2m)!}{2}\sum _{n=1}^{\infty }\frac{4^n}{n^{k+1}\binom{2n}{n}}\sum _{j=0}^m\frac{(-1)^jt_n^{\star }(\{2\}_j)}{(2m-2j)!}\left(\frac{\pi }{2}\right)^{2m-2j}\\
&=\frac{2^k}{(k-1)!}\sum _{n=1}^{\infty }\left(\int _0^1t^{n-1}\ln ^{k-1}\!\left(\frac{1+t}{2\sqrt{t}}\right)\,dt\right)\int _0^{\frac{\pi }{2}}x^{2m}\frac{(-1)^{n-1}+\cos (2nx)}{\cos (x)}\,dx.
\end{align*}
Using part (iii) of Lemma \ref{lma2.1} gives
\begin{equation*}
\begin{aligned}
&\frac{(2m)!}{2}\sum _{j=0}^m\frac{(-1)^j}{(2m-2j)!}\left(\frac{\pi }{2}\right)^{2m-2j}\sum _{n=1}^{\infty }\frac{4^n}{n^{k+1}\binom{2n}{n}}t_n^{\star }(\{2\}_j)\\
&=(2m)!\frac{2^{k+1}}{(k-1)!}\sum _{n=1}^{\infty }\left(\int _0^1t^{n-1}\ln ^{k-1}\!\left(\frac{1+t}{2\sqrt{t}}\right)\,dt\right)\sum _{j=0}^m\frac{(-1)^{j+n-1}O_n^{(2j+1)}}{(2m-2j)!}\left(\frac{\pi }{2}\right)^{2m-2j}\\
&\overset{t\mapsto t^2}{=}(2m)!\frac{2^{k+2}}{(k-1)!}\sum _{j=0}^m\frac{(-1)^j}{(2m-2j)!}\left(\frac{\pi }{2}\right)^{2m-2j}\int _0^1\left(\sum _{n=1}^{\infty }(-1)^{n-1}O_n^{(2j+1)}t^{2n-1}\right)\ln ^{k-1}\!\left(\frac{1+t^2}{2t}\right)\,dt.
\end{aligned}\label{eq3.3}\tag{3.3}
\end{equation*}
Furthermore, for $|t|<1$, it follows that
\begin{equation*}
\begin{aligned}
\sum _{n=1}^{\infty }(-1)^{n-1}O_n^{(2j+1)}t^{2n-1}&=\sum _{n=1}^{\infty }(-1)^{n-1}t^{2n-1}\sum _{k=1}^n\frac{1}{(2k-1)^{2j+1}}\\
&=\sum _{k=1}^{\infty }\frac{1}{(2k-1)^{2j+1}}\sum _{n=k}^{\infty }(-1)^{n-1}t^{2n-1}\\
&=\frac{1}{1+t^2}\sum _{k=1}^{\infty }\frac{(-1)^{k-1}t^{2k-1}}{(2k-1)^{2j+1}}=\frac{\operatorname{Ti}_{2j+1}(t)}{1+t^2}.
\end{aligned}\label{eq3.4}\tag{3.4}
\end{equation*}
Substituting \eqref{eq3.4} into \eqref{eq3.3}, we deduce that
\begin{align*}
&\sum _{j=0}^m\frac{(-1)^j}{(2m-2j)!}\left(\frac{\pi }{2}\right)^{2m-2j}\left(\sum _{n=1}^{\infty }\frac{4^n}{n^{k+1}\binom{2n}{n}}t_n^{\star }(\{2\}_j)\right)\\
&=\sum _{j=0}^m\frac{(-1)^j}{(2m-2j)!}\left(\frac{\pi }{2}\right)^{2m-2j}\left(\frac{2^{k+3}}{(k-1)!}\int _0^1\frac{\ln ^{k-1}\!\left(\frac{1+t^2}{2t}\right)\operatorname{Ti}_{2j+1}(t)}{1+t^2}\,dt\right).
\end{align*}
As in the proof of Theorem \ref{thm3.1}, the identity follows by comparing coefficients in the resulting lower triangular system.
\end{proof}
The following proposition is the main result of \cite{preprint2}, where it first appears and is obtained using less general identities.
\begin{proposition} \label{prop3.5}Let $j\in \mathbb{Z}_{\ge 0}$. Then
$$\sum _{n=1}^{\infty }\frac{4^n}{n^2\binom{2n}{n}}t_n^{\star}(\{2\}_j)=8\sum _{k=0}^{2j}(-1)^k\beta (k+1)\beta (2j-k+1).$$
\end{proposition}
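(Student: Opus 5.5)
Set $k=1$ in Theorem \ref{thm3.4}. The logarithmic factor becomes $\ln^0\equiv 1$, so the claim reduces to
$$\sum_{n=1}^{\infty}\frac{4^n}{n^2\binom{2n}{n}}t_n^{\star}(\{2\}_j)=16\int_0^1\frac{\operatorname{Ti}_{2j+1}(t)}{1+t^2}\,dt\eqqcolon 16\,I_j .$$
It therefore suffices to show that $2I_j=\sum_{k=0}^{2j}(-1)^k\beta(k+1)\beta(2j-k+1)$. We mimic the integration-by-parts symmetrization used for Proposition \ref{prop3.3}, with inverse tangent integrals in place of polylogarithms.

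Since $\operatorname{Ti}_1(t)=\arctan(t)$, we have $\frac{dt}{1+t^2}=d\operatorname{Ti}_1(t)$. We also use $\operatorname{Ti}_m'(t)=\operatorname{Ti}_{m-1}(t)/t$ and $\operatorname{Ti}_m(1)=\beta(m)$. For $a,b\ge1$ define
$$J(a,b)=\int_0^1\frac{\operatorname{Ti}_a(t)\operatorname{Ti}_b(t)}{t}\,dt .$$
This converges, since $\operatorname{Ti}_m(t)\sim t$ as $t\to0$ and $\operatorname{Ti}_m$ is bounded on $[0,1]$. Moreover $J(a,b)=J(b,a)$.

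One integration by parts against $d\operatorname{Ti}_1$ gives
$$I_j=\beta(1)\beta(2j+1)-J(1,2j).$$
Differentiating $\operatorname{Ti}_{a+1}$ and $\operatorname{Ti}_b$ in the same way gives the shift relation
$$J(a,b)=\beta(a+1)\beta(b)-J(a+1,b-1)\qquad(b\ge2).$$
The boundary terms at $0$ vanish because each $\operatorname{Ti}_m(t)\to0$ there.

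Iterate the shift relation starting from $J(1,2j)$, moving one index up and the other down until the pair reaches $J(2j,1)=J(1,2j)$. Tracking signs, this gives
$$I_j=\sum_{k=0}^{2j-1}(-1)^k\beta(k+1)\beta(2j-k+1)+J(1,2j).$$
Adding this to the first expression $I_j=\beta(1)\beta(2j+1)-J(1,2j)$ cancels the unknown integral $J(1,2j)$. The extra term $\beta(1)\beta(2j+1)$ is exactly the $k=2j$ summand, which carries sign $(-1)^{2j}=+1$. Hence
$$2I_j=\sum_{k=0}^{2j}(-1)^k\beta(k+1)\beta(2j-k+1),$$
and multiplying by $8$ gives the proposition.

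As a check, for $j=0$ no iteration is needed: $I_0=\int_0^1\arctan(t)\,d\arctan(t)=\tfrac12\beta(1)^2$ directly, and the formula gives $16I_0=8\beta(1)^2$.

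The main obstacle is the sign bookkeeping in the iteration. One must confirm that after $2j-1$ shifts the remaining integral returns with coefficient $+1$, since the cancellation of $J(1,2j)$ depends on it. I would verify this on $j=1$ before writing out the general telescoping.
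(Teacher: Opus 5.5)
Your proof is correct and follows essentially the same route as the paper: set $k=1$ in Theorem \ref{thm3.4}, then iterate integration by parts against $d\operatorname{Ti}_m$ until the original integral reappears with the opposite sign, and solve for it. The only cosmetic difference is that the paper runs all $2j$ steps, using $\operatorname{Ti}_0(t)=\frac{t}{1+t^2}$ at the last one, to land back on $\int_0^1\frac{\operatorname{Ti}_{2j+1}(t)}{1+t^2}\,dt$; you instead stop at $J(2j,1)$, invoke $J(a,b)=J(b,a)$, and treat $j=0$ separately, which amounts to the same closing step, and your sign bookkeeping checks out.
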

\begin{proof}
Setting $k=1$ in Theorem \ref{thm3.4}, we obtain
$$\sum _{n=1}^{\infty }\frac{4^n}{n^2\binom{2n}{n}}t_n^{\star }(\{2\}_j)=16\int _0^1\frac{\operatorname{Ti}_{2j+1}(t)}{1+t^2}\,dt.$$
As in the proof of part (iii) of Lemma 2 in \cite{preprint2}, integration by parts gives
$$\int _0^1\frac{\operatorname{Ti}_{2j+1}(t)}{1+t^2}\,dt=\beta (1)\beta (2j+1)-\int _0^1\frac{\arctan (t)\operatorname{Ti}_{2j}(t)}{t}\,dt.$$
Iterating this $2j$ times and noting that $\operatorname{Ti}_0(t)=\frac{t}{1+t^2}$, we obtain
$$\int _0^1\frac{\operatorname{Ti}_{2j+1}(t)}{1+t^2}\,dt=\sum _{k=0}^{2j}(-1)^k\beta (k+1)\beta (2j-k+1)-\int _0^1\frac{\operatorname{Ti}_{2j+1}(t)}{1+t^2}\,dt.$$
Since the final integral coincides with the original one, rearranging completes the proof.
\end{proof}

\begin{theorem} \label{thm3.6}Let $j,k\in \mathbb{Z}_{>0}$. Then
$$\sum _{n=1}^{\infty }\frac{\binom{2n}{n}}{n^k4^n}\sum _{n\ge n_1\ge \cdots \ge n_j\ge 1}\frac{4^{n_j}}{\binom{2n_j}{n_j}}\prod _{i=1}^j\frac{1}{n_i^2}=\frac{2^{k+1}}{(k-1)!}\int _0^1\frac{\ln ^{k-1}\!\left(\frac{1+t}{2\sqrt{t}}\right)\chi _{2j}(t)}{t}\,dt.$$
\end{theorem}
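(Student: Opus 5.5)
We follow the template of Theorem \ref{thm3.1}, now with odd powers $x^{2m-1}$ in place of $x^{2m}$. Starting from
$$\sum_{n=1}^{\infty}\frac{1}{n^k}\int_0^{\frac{\pi}{2}}x^{2m-1}\cos^{2n}(x)\,dx=\int_0^{\frac{\pi}{2}}x^{2m-1}\operatorname{Li}_k(\cos^2(x))\,dx,$$
we evaluate both sides in two ways.

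On the left we apply part (iii) of Lemma \ref{lma2.2}. This gives, after multiplying out, a finite combination of two kinds of sums:
\begin{itemize}
\item the already known sums $S_{k,j}\coloneqq\sum_n\frac{\binom{2n}{n}}{n^k4^n}\zeta_n^{\star}(\{2\}_j)$ for $0\le j\le m-1$, with coefficients $\frac{(-1)^j}{2^{2j}(2m-2j)!}(\frac{\pi}{2})^{2m-2j}$;
\item the single new sum $T_{k,m}$, the left side of the statement with $j=m$, with coefficient $\frac{(-1)^m}{4^m}$.
\end{itemize}

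On the right we insert part (i) of Lemma \ref{lma2.3} and integrate termwise using part (ii) of Lemma \ref{lma2.1}. This produces
$$(2m-1)!\sum_{j=0}^{m-1}\frac{(-1)^{j+n-1}}{(2n)^{2j}(2m-2j)!}\left(\frac{\pi}{2}\right)^{2m-2j}+(2m-1)!(-1)^m\frac{1-(-1)^n}{(2n)^{2m}},$$
multiplied by the coefficient $\frac{2^k}{(k-1)!}\int_0^1t^{n-1}\ln^{k-1}(\cdots)\,dt$ and summed over $n$.

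The terms with $j\le m-1$ resum exactly as in the proof of Theorem \ref{thm3.1}. They give $\frac{(-1)^j}{2^{2j}(2m-2j)!}(\frac{\pi}{2})^{2m-2j}$ times $-\frac{2^k}{(k-1)!}\int_0^1\frac{\ln^{k-1}(\cdot)\operatorname{Li}_{2j}(-t)}{t}\,dt$. By Theorem \ref{thm3.1}, each of these equals the corresponding $S_{k,j}$ term on the left, so they cancel for every $j\le m-1$.

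Only the extra boundary term remains. Since $1-(-1)^n$ equals $2$ for odd $n$ and $0$ otherwise, that term is
$$(2m-1)!\,(-1)^m\frac{2^k}{(k-1)!}\int_0^1\sum_{n\ \mathrm{odd}}\frac{2\,t^{n-1}}{(2n)^{2m}}\ln^{k-1}\!\left(\frac{1+t}{2\sqrt t}\right)dt=(2m-1)!\,\frac{(-1)^m}{4^m}\,\frac{2^{k+1}}{(k-1)!}\int_0^1\frac{\chi_{2m}(t)}{t}\ln^{k-1}\!\left(\frac{1+t}{2\sqrt t}\right)dt,$$
using $\sum_{n\ \mathrm{odd}}t^n/n^{2m}=\chi_{2m}(t)$. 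The left side's remaining term is $(2m-1)!\frac{(-1)^m}{4^m}T_{k,m}$. Cancelling the common factor and renaming $m\to j$ gives the claim.

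There is no triangular inversion here, since each $m$ isolates a single new term directly. The main obstacle is justifying the interchange of the $n$-sum with the $x$- and $t$-integrals, including convergence at $k=1$ and the termwise integration of the Fourier series of Lemma \ref{lma2.3}. We expect to handle this by dominated convergence: the coefficients are $O(1/n)$, the sums are absolutely convergent for $|t|<1$, and $\ln^{k-1}(\cdot)$ is integrable near $t=0$, just as in Theorem \ref{thm3.1}. The other point to check is the bookkeeping of the factor $2^{2m}$ against $4^m$ and the factor $2$ from $1-(-1)^n$, which together yield exactly $2^{k+1}$.
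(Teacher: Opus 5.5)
Your proposal is correct and follows the paper's proof essentially step for step. The paper likewise combines part (iii) of Lemma \ref{lma2.2} with part (i) of Lemma \ref{lma2.3} and part (ii) of Lemma \ref{lma2.1}, cancels the $j\le m-1$ terms using Theorem \ref{thm3.1}, and identifies the leftover $\frac{1-(-1)^n}{(2n)^{2m}}$ contribution as $\frac{(-1)^m}{4^m}\frac{2^{k+1}}{(k-1)!}\int_0^1 \frac{\ln^{k-1}\left(\frac{1+t}{2\sqrt{t}}\right)\chi_{2m}(t)}{t}\,dt$ via $\chi_s(x)=\frac{1}{2}\left(\operatorname{Li}_s(x)-\operatorname{Li}_s(-x)\right)$, with no triangular inversion, and your tracking of the factors $2^{2m}$, $4^m$ and $2$ is right; your remark on justifying the interchanges, including $k=1$ (where Lemma \ref{lma2.3} as stated needs $k>1$ but reduces to the classical Fourier series of $-2\ln\sin x$), covers a point the paper leaves implicit.
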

\begin{proof}
From
$$\sum _{n=1}^{\infty }\frac{1}{n^k}\int _0^{\frac{\pi }{2}}x^{2m-1}\cos ^{2n}(x)\,dx=\int _0^{\frac{\pi }{2}}x^{2m-1}\operatorname{Li}_k(\cos ^2(x))\,dx,$$
and parts (iii) and (i) of Lemmas \ref{lma2.2} and \ref{lma2.3}, respectively, we obtain
\begin{align*}
&(2m-1)!\sum _{n=1}^{\infty }\frac{\binom{2n}{n}}{n^k4^n}\left(\sum _{j=0}^{m-1}\frac{(-1)^j\zeta _n^{\star }(\{2\}_j)}{2^{2j}(2m-2j)!}\left(\frac{\pi }{2}\right)^{2m-2j}+\frac{(-1)^m}{4^m}\sum _{n\ge n_1\ge \cdots \ge n_m\ge 1}\frac{4^{n_m}}{\binom{2n_m}{n_m}}\prod _{i=1}^m\frac{1}{n_i^2}\right)\\
&=\frac{2^k}{(k-1)!}\sum _{n=1}^{\infty }\left(\int _0^1t^{n-1}\ln ^{k-1}\!\left(\frac{1+t}{2\sqrt{t}}\right)\,dt\right)\int _0^{\frac{\pi }{2}}x^{2m-1}\left((-1)^{n-1}+\cos (2nx)\right)\,dx.
\end{align*}
From part (ii) of Lemma \ref{lma2.1}, it follows that
\begin{equation*}
\begin{aligned}
&(2m-1)!\left(\sum _{j=0}^{m-1}\frac{(-1)^j}{2^{2j}(2m-2j)!}\left(\frac{\pi }{2}\right)^{2m-2j}\sum _{n=1}^{\infty }\frac{\binom{2n}{n}}{n^k4^n}\zeta _n^{\star }(\{2\}_j)\right.\\
&\qquad\qquad\left.+\frac{(-1)^m}{4^m}\sum _{n=1}^{\infty }\frac{\binom{2n}{n}}{n^k4^n}\sum _{n\ge n_1\ge \cdots \ge n_m\ge 1}\frac{4^{n_m}}{\binom{2n_m}{n_m}}\prod _{i=1}^m\frac{1}{n_i^2}\right)\\
&=(2m-1)!\frac{2^k}{(k-1)!}\left(\sum _{j=0}^{m-1}\frac{(-1)^{j-1}}{2^{2j}(2m-2j)!}\left(\frac{\pi }{2}\right)^{2m-2j}\int _0^1\left(\sum _{n=1}^{\infty }\frac{(-t)^n}{n^{2j}}\right)\frac{\ln ^{k-1}\!\left(\frac{1+t}{2\sqrt{t}}\right)}{t}\,dt\right.\\
&\qquad\qquad\left.+\frac{(-1)^m}{4^m}\int _0^1\left(\sum _{n=1}^{\infty }\frac{t^n-(-t)^n}{n^{2m}}\right)\frac{\ln ^{k-1}\!\left(\frac{1+t}{2\sqrt{t}}\right)}{t}\,dt\right)\\
&=(2m-1)!\frac{2^k}{(k-1)!}\left(\sum _{j=0}^{m-1}\frac{(-1)^{j-1}}{2^{2j}(2m-2j)!}\left(\frac{\pi }{2}\right)^{2m-2j}\int _0^1\frac{\ln ^{k-1}\!\left(\frac{1+t}{2\sqrt{t}}\right)\operatorname{Li}_{2j}(-t)}{t}\,dt\right.\\
&\qquad\qquad\left.+\frac{(-1)^m}{4^m}\int _0^1\frac{\ln ^{k-1}\!\left(\frac{1+t}{2\sqrt{t}}\right)\left(\operatorname{Li}_{2m}(t)-\operatorname{Li}_{2m}(-t)\right)}{t}\,dt\right).
\end{aligned}\label{eq3.5}\tag{3.5}
\end{equation*}
By Theorem \ref{thm3.1} together with $\chi _s(x)=\frac{1}{2}\left(\operatorname{Li}_s(x)-\operatorname{Li}_s(-x)\right)$, the right-hand side simplifies to
\begin{align*}
&(2m-1)!\left(\sum _{j=0}^{m-1}\frac{(-1)^j}{2^{2j}(2m-2j)!}\left(\frac{\pi }{2}\right)^{2m-2j}\sum _{n=1}^{\infty }\frac{\binom{2n}{n}}{n^k4^n}\zeta _n^{\star }(\{2\}_j)\right.\\
&\qquad\qquad\left.+\frac{(-1)^m}{4^m}\frac{2^{k+1}}{(k-1)!}\int _0^1\frac{\ln ^{k-1}\!\left(\frac{1+t}{2\sqrt{t}}\right)\chi _{2m}(t)}{t}\,dt\right).
\end{align*}
Therefore, cancelling the identical expressions on both sides of \eqref{eq3.5} and relabeling $m$ as $j$ yields the desired identity.
\end{proof}

\begin{corollary} \label{corollary3.7}Let $j\in \mathbb{Z}_{>0}$. Then
$$\sum _{n=1}^{\infty }\frac{\binom{2n}{n}}{n\,4^n}\sum _{n\ge n_1\ge \cdots \ge n_j\ge 1}\frac{4^{n_j}}{\binom{2n_j}{n_j}}\prod _{i=1}^j\frac{1}{n_i^2}=4\,\lambda (2j+1).$$
\end{corollary}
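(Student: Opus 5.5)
Set $k=1$ in Theorem \ref{thm3.6}. The factor $\ln^{k-1}\!\left(\frac{1+t}{2\sqrt{t}}\right)$ then becomes $1$, and $\frac{2^{k+1}}{(k-1)!}=4$. The left-hand side is exactly the series in the corollary, so
$$\sum _{n=1}^{\infty }\frac{\binom{2n}{n}}{n\,4^n}\sum _{n\ge n_1\ge \cdots \ge n_j\ge 1}\frac{4^{n_j}}{\binom{2n_j}{n_j}}\prod _{i=1}^j\frac{1}{n_i^2}=4\int _0^1\frac{\chi _{2j}(t)}{t}\,dt .$$

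Next, apply the integral relation $\chi_s(x)=\int_0^x \frac{\chi_{s-1}(t)}{t}\,dt$ recalled in the introduction, with $s=2j+1$ and $x=1$. This gives $\int_0^1 \frac{\chi_{2j}(t)}{t}\,dt=\chi_{2j+1}(1)$. Since $2j+1\ge 3>1$, we also have $\chi_{2j+1}(1)=\lambda(2j+1)$, and the claimed value $4\,\lambda(2j+1)$ follows.

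The only point needing care is the use of the relation at the endpoint $x=1$. The integrand $\chi_{2j}(t)/t$ is bounded on $(0,1]$, because $\chi_{2j}(t)\sim t$ as $t\to 0$ and $\chi_{2j}(1)=\lambda(2j)$ is finite for $j\ge 1$. One can also justify the step directly: integrate the series $\sum_n t^{2n-2}/(2n-1)^{2j}$ term by term, which is allowed by monotone convergence since all terms are nonnegative. This yields $\sum_n 1/(2n-1)^{2j+1}=\lambda(2j+1)$. So I expect no real obstacle here.
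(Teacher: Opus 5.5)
Your proposal is correct and matches the paper's proof: set $k=1$ in Theorem \ref{thm3.6} to obtain $4\int_0^1 \chi_{2j}(t)\,t^{-1}\,dt = 4\chi_{2j+1}(1)$, then use $\chi_s(1)=\lambda(s)$. Your term-by-term (monotone convergence) justification of the step at the endpoint $t=1$ is a small detail that the paper leaves implicit.
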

\begin{proof}
Taking $k=1$ in Theorem \ref{thm3.6} gives
$$\sum _{n=1}^{\infty }\frac{\binom{2n}{n}}{n\,4^n}\sum _{n\ge n_1\ge \cdots \ge n_j\ge 1}\frac{4^{n_j}}{\binom{2n_j}{n_j}}\prod _{i=1}^j\frac{1}{n_i^2}=4\int _0^1\frac{\chi _{2j}(t)}{t}\,dt=4\,\chi _{2j+1}(1).$$
The claim now follows from $\chi _s(1)=\lambda (s)$.
\end{proof}

\begin{theorem} \label{thm3.8}Let $j,k\in \mathbb{Z}_{>0}$. Then
$$\sum _{n=1}^{\infty }\frac{4^n}{n^{k+1}\binom{2n}{n}}\sum _{n\ge n_1\ge \cdots \ge n_{j-1}>n_j\ge 0}\frac{\binom{2n_j}{n_j}}{(2n_j+1)4^{n_j}}\prod _{i=1}^{j-1}\frac{1}{(2n_i-1)^2}=\frac{2^{k+3}}{(k-1)!}\int _0^1\frac{\ln ^{k-1}\!\left(\frac{1+t^2}{2t}\right)\chi _{2j}(t)}{1+t^2}\,dt.$$
\end{theorem}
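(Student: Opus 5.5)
The plan is to run the same machine as in Theorems \ref{thm3.1} and \ref{thm3.4}, now with $\sin$ in place of $\cos$. For $m\in\mathbb{Z}_{>0}$ I start from the identity
$$\sum_{n=1}^{\infty}\frac{1}{n^k}\int_0^{\frac{\pi}{2}}x^{2m-1}\sin^{2n-1}(x)\,dx=\int_0^{\frac{\pi}{2}}x^{2m-1}\frac{\operatorname{Li}_k(\sin^2(x))}{\sin(x)}\,dx.$$

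\emph{Left side.} I expand it with part (iv) of Lemma \ref{lma2.2}. After interchanging the $n$- and $j$-sums, this gives
$$\frac{(2m-1)!}{2}\sum_{j=1}^m\frac{(-1)^{j-1}}{(2m-2j)!}\left(\frac{\pi}{2}\right)^{2m-2j}S_{k,j},$$
where $S_{k,j}$ denotes the series on the left of the theorem.

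\emph{Right side.} I insert the Fourier expansion of part (ii) of Lemma \ref{lma2.3}. This produces the integrals $\int_0^{\frac{\pi}{2}}x^{2m-1}\frac{1-\cos(2nx)}{\sin(x)}\,dx$, which part (iv) of Lemma \ref{lma2.1} evaluates in terms of $\overline{O}_n^{(2j)}$. Writing $c_n=\int_0^1t^{n-1}\ln^{k-1}\!\left(\frac{1+t}{2\sqrt t}\right)dt$, the right side becomes
$$(2m-1)!\,\frac{2^{k+1}}{(k-1)!}\sum_{j=1}^m\frac{(-1)^{j-1}}{(2m-2j)!}\left(\frac{\pi}{2}\right)^{2m-2j}\sum_{n=1}^\infty(-1)^{n-1}\overline{O}_n^{(2j)}c_n .$$

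Next I substitute $t\mapsto t^2$ in $c_n$, exactly as in \eqref{eq3.3}. This turns $c_n$ into $2\int_0^1 t^{2n-1}\ln^{k-1}\!\left(\frac{1+t^2}{2t}\right)dt$, and the inner series becomes
$$\sum_{n\ge1}(-1)^{n-1}\overline{O}_n^{(2j)}t^{2n-1}.$$
I sum it by interchanging the order of summation as in \eqref{eq3.4}. Since $\sum_{n\ge k}(-1)^{n-1}t^{2n-1}=\frac{(-1)^{k-1}t^{2k-1}}{1+t^2}$, the sign $(-1)^{k-1}$ cancels the one inside $\overline{O}_n^{(2j)}$. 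This leaves
$$\frac{1}{1+t^2}\sum_{k\ge1}\frac{t^{2k-1}}{(2k-1)^{2j}}=\frac{\chi_{2j}(t)}{1+t^2}.$$
Collecting constants, the right side equals
$$(2m-1)!\,\frac{2^{k+2}}{(k-1)!}\sum_{j=1}^m\frac{(-1)^{j-1}}{(2m-2j)!}\left(\frac{\pi}{2}\right)^{2m-2j}\int_0^1\frac{\ln^{k-1}\!\left(\frac{1+t^2}{2t}\right)\chi_{2j}(t)}{1+t^2}\,dt.$$

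Comparing with the left side (which carries the factor $\tfrac12$), both sides are the same lower triangular combination, indexed by $m$, of the quantities $j=1,\dots,m$. The diagonal coefficient (at $j=m$) is $1/0!\neq0$. Hence, by induction on $m$, or equivalently by inverting the triangular system as in Theorem \ref{thm3.1}, each $S_{k,j}$ equals $\frac{2^{k+3}}{(k-1)!}$ times the stated integral.

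\emph{Main obstacle: analytic bookkeeping.} The algebra is routine; the delicate points are the following.
\begin{itemize}
\item Justifying the interchanges of sums and integrals. The Fourier series in Lemma \ref{lma2.3} converges only conditionally near $x=0$, and the kernel $\frac{1-\cos(2nx)}{\sin(x)}$ must be handled with care there. I would justify the interchanges via the nonnegativity and decay of $c_n$ together with Abel summation. Alternatively, I would insert a factor $r^n$, with $r<1$, and let $r\to1^-$.
\item The case $k=1$. Lemma \ref{lma2.3} is stated only for $k>1$. Here I would check directly that $\operatorname{Li}_1(\sin^2 x)=-2\ln\cos x=2\sum_{n\ge1}(-1)^{n-1}\frac{1-\cos(2nx)}{n}$, using $\ln 2=\sum_{n\ge1}(-1)^{n-1}/n$. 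This agrees with the formula of Lemma \ref{lma2.3}(ii) at $k=1$, where $2\int_0^1t^{n-1}\,dt=2/n$, so the argument goes through unchanged.
\end{itemize}
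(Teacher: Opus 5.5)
Your proposal is correct and follows the paper's proof essentially verbatim. You use the same starting identity, Lemma~\ref{lma2.2}(iv) on the left, Lemma~\ref{lma2.3}(ii) and Lemma~\ref{lma2.1}(iv) on the right, the substitution $t\mapsto t^2$, the generating function \eqref{eq3.7}, and the lower triangular comparison, with all constants matching. Your added remarks fill gaps the paper leaves open, notably that $k=1$ lies outside the stated range of Lemma~\ref{lma2.3}; your check of that case is right. However, the analytic issue is narrower than you suggest. For $k\ge 2$ one has $c_n=O(n^{-(2k-1)})$ and $\bigl|\frac{1-\cos(2nx)}{\sin(x)}\bigr|\le 2n$, so the series converges uniformly on $[0,\frac{\pi}{2}]$ and termwise integration is immediate. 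That leaves only $k=1$ for your Abel-summation argument, and there the singularity sits at $x=\frac{\pi}{2}$, not at $x=0$.
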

\begin{proof}
Using
$$\sum _{n=1}^{\infty }\frac{1}{n^k}\int _0^{\frac{\pi }{2}}x^{2m-1}\sin ^{2n-1}(x)\,dx=\int _0^{\frac{\pi }{2}}x^{2m-1}\frac{\operatorname{Li}_k(\sin ^2(x))}{\sin (x)}\,dx,$$
together with part (iv) of Lemma \ref{lma2.2} and part (ii) of Lemma \ref{lma2.3}, we derive
\begin{align*}
&\frac{(2m-1)!}{2}\sum _{n=1}^{\infty }\frac{4^n}{n^{k+1}\binom{2n}{n}}\sum _{j=1}^m\frac{(-1)^{j-1}}{(2m-2j)!}\left(\frac{\pi }{2}\right)^{2m-2j}\sum _{n\ge n_1\ge \cdots \ge n_{j-1}>n_j\ge 0}\frac{\binom{2n_j}{n_j}}{(2n_j+1)4^{n_j}}\prod _{i=1}^{j-1}\frac{1}{(2n_i-1)^2}\\
&=\frac{2^k}{(k-1)!}\sum _{n=1}^{\infty }(-1)^{n-1}\left(\int _0^1t^{n-1}\ln ^{k-1}\!\left(\frac{1+t}{2\sqrt{t}}\right)\,dt\right)\int _0^{\frac{\pi }{2}}x^{2m-1}\frac{1-\cos (2nx)}{\sin (x)}\,dx.
\end{align*}
Applying part (iv) of Lemma \ref{lma2.1}, we obtain
\begin{equation*}
\begin{aligned}
&\frac{(2m-1)!}{2}\sum _{j=1}^m\frac{(-1)^{j-1}}{(2m-2j)!}\left(\frac{\pi }{2}\right)^{2m-2j}\sum _{n=1}^{\infty }\frac{4^n}{n^{k+1}\binom{2n}{n}}\sum _{n\ge n_1\ge \cdots \ge n_{j-1}>n_j\ge 0}\frac{\binom{2n_j}{n_j}}{(2n_j+1)4^{n_j}}\prod _{i=1}^{j-1}\frac{1}{(2n_i-1)^2}\\
&=(2m-1)!\frac{2^{k+1}}{(k-1)!}\sum _{n=1}^{\infty }(-1)^{n-1}\left(\int _0^1t^{n-1}\ln ^{k-1}\!\left(\frac{1+t}{2\sqrt{t}}\right)\,dt\right)\sum _{j=1}^m\frac{(-1)^{j-1}\overline{O}_n^{(2j)}}{(2m-2j)!}\left(\frac{\pi }{2}\right)^{2m-2j}\\
&\overset{t\mapsto t^2}{=}(2m-1)!\frac{2^{k+2}}{(k-1)!}\sum _{j=1}^m\frac{(-1)^{j-1}}{(2m-2j)!}\left(\frac{\pi }{2}\right)^{2m-2j}\int _0^1\left(\sum _{n=1}^{\infty }(-1)^{n-1}\overline{O}_n^{(2j)}t^{2n-1}\right)\ln ^{k-1}\!\left(\frac{1+t^2}{2t}\right)\,dt.
\end{aligned}\label{eq3.6}\tag{3.6}
\end{equation*}
In addition, for $|t|<1$, we have that
\begin{equation*}
\begin{aligned}
\sum _{n=1}^{\infty }(-1)^{n-1}\overline{O}_n^{(2j)}t^{2n-1}&=\sum _{n=1}^{\infty }(-1)^{n-1}t^{2n-1}\sum _{k=1}^n\frac{(-1)^{k-1}}{(2k-1)^{2j}}\\
&=\sum _{k=1}^{\infty }\frac{(-1)^{k-1}}{(2k-1)^{2j}}\sum _{n=k}^{\infty }(-1)^{n-1}t^{2n-1}\\
&=\frac{1}{1+t^2}\sum _{k=1}^{\infty }\frac{t^{2k-1}}{(2k-1)^{2j}}=\frac{\chi _{2j}(t)}{1+t^2}.
\end{aligned}\label{eq3.7}\tag{3.7}
\end{equation*}
Thus, by substituting \eqref{eq3.7} into \eqref{eq3.6}, we arrive at
\begin{align*}
&\sum _{j=1}^m\frac{(-1)^{j-1}}{(2m-2j)!}\left(\frac{\pi }{2}\right)^{2m-2j}\left(\sum _{n=1}^{\infty }\frac{4^n}{n^{k+1}\binom{2n}{n}}\sum _{n\ge n_1\ge \cdots \ge n_{j-1}>n_j\ge 0}\frac{\binom{2n_j}{n_j}}{(2n_j+1)4^{n_j}}\prod _{i=1}^{j-1}\frac{1}{(2n_i-1)^2}\right)\\
&=\sum _{j=1}^m\frac{(-1)^{j-1}}{(2m-2j)!}\left(\frac{\pi }{2}\right)^{2m-2j}\left(\frac{2^{k+3}}{(k-1)!}\int _0^1\frac{\ln ^{k-1}\!\left(\frac{1+t^2}{2t}\right)\chi _{2j}(t)}{1+t^2}\,dt\right).
\end{align*}
The desired identity follows by comparing coefficients in the resulting lower triangular structure.
\end{proof}

\begin{theorem} \label{thm3.9}Let $j\in \mathbb{Z}_{\ge 0}, k\in \mathbb{Z}_{>0}$. Then
$$\sum _{n=0}^{\infty }\frac{4^n}{(2n+1)^{k+1}\binom{2n}{n}}t_{n+1}^{\star }(\{2\}_j)=\frac{2}{(k-1)!}\int _0^1\frac{\ln ^{k-1}\!\left(\frac{1+t^2}{2t}\right)\operatorname{Ti}_{2j+1}(t)}{t}\,dt.$$
\end{theorem}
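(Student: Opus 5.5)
Our approach follows the proof of Theorem \ref{thm3.4}, using the odd-indexed chi expansion in place of the $\operatorname{Li}_k(\cos^2)$ expansion. Since $\frac{4^n}{(2n+1)\binom{2n}{n}}$ with $n\mapsto n-1$ equals $\frac{4^{n-1}}{(2n-1)\binom{2n-2}{n-1}}=\frac{4^n}{2n\binom{2n}{n}}$, part (ii) of Lemma \ref{lma2.2} can be rewritten as
$$\int_0^{\frac{\pi}{2}}x^{2m}\cos^{2n+1}(x)\,dx=(2m)!\frac{4^n}{(2n+1)\binom{2n}{n}}\sum_{j=0}^m\frac{(-1)^jt_{n+1}^{\star}(\{2\}_j)}{(2m-2j)!}\left(\frac{\pi}{2}\right)^{2m-2j},\qquad n\ge 0.$$
Dividing by $(2n+1)^k$ and summing over $n\ge 0$ gives
$$\int_0^{\frac{\pi}{2}}x^{2m}\chi_k(\cos(x))\,dx.$$
The left-hand side therefore becomes $(2m)!\sum_{j}\frac{(-1)^j}{(2m-2j)!}(\frac{\pi}{2})^{2m-2j}S_j$, where $S_j$ is the series in the theorem. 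For $k=1$ the same manipulation works, with $\chi_1=\operatorname{arctanh}$; any convergence issue there is integrable and can be handled by a limiting argument or by dominated convergence.

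On the right-hand side we insert part (iii) of Lemma \ref{lma2.3}. That lemma is stated for $k>1$; for $k=1$ we verify its analogue directly from the same proof, since $\ln^0=1$ and $\int_0^1t^{2n-2}dt=\frac{1}{2n-1}$ gives the classical Fourier series of $\operatorname{arctanh}(\cos x)$. This leaves the integrals
$$\int_0^{\frac{\pi}{2}}x^{2m}\cos((2n-1)x)\,dx.$$
Rather than derive these afresh, we take differences: $\cos((2n-1)x)=\frac12\bigl(g_n(x)\cos x-\dots\bigr)$ is awkward, so instead we use the identity from the proof of Lemma \ref{lma2.1},
$$\cos((2n-1)x)=\tfrac12\bigl(h_n(x)+h_{n-1}(x)\bigr)\cos(x),\qquad h_n=\frac{(-1)^{n-1}+\cos(2nx)}{\cos x}.$$
A cleaner route is to interchange sums directly. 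Write
$$\frac{2}{(k-1)!}\sum_{n}c_n\cos((2n-1)x),\qquad c_n=\int_0^1t^{2n-2}\ln^{k-1}\!\Bigl(\frac{1+t^2}{2t}\Bigr)dt,$$
and use the summation by parts identity
$$\sum_n c_n\cos((2n-1)x)=\sum_n(-1)^{n-1}\Bigl(\sum_{l\ge n}(-1)^{l-1}c_l\Bigr)\dots$$
This is exactly the device behind (3.4), run in reverse. Concretely, $\sum_{n\ge 1}(-1)^{n-1}O_n^{(2j+1)}t^{2n-1}=\frac{\operatorname{Ti}_{2j+1}(t)}{1+t^2}$, and the factor $\frac{1}{1+t^2}$ in Theorem \ref{thm3.4} is what must disappear to produce the $\frac1t$ of the present theorem. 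So we expect the direct computation to give
$$\int_0^{\frac{\pi}{2}}x^{2m}\cos((2n-1)x)\,dx=(2m)!\sum_{j=0}^m\frac{(-1)^{j+n-1}}{(2n-1)^{2j+1}(2m-2j)!}\left(\frac{\pi}{2}\right)^{2m-2j}+(\text{boundary terms}).$$
Repeated integration by parts as in Lemma \ref{lma2.1} produces exactly this main sum. The boundary terms at $0$ vanish, because odd derivatives of $x^{2m}$ vanish there and $\sin 0=0$. At $\frac{\pi}{2}$, $\cos((2n-1)\frac{\pi}{2})=0$ and $\sin((2n-1)\frac{\pi}{2})=(-1)^{n-1}$, so there are no extra terms.

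Substituting this, the coefficient of $\frac{(-1)^j}{(2m-2j)!}(\frac{\pi}{2})^{2m-2j}$ on the right is
$$\frac{2}{(k-1)!}\int_0^1\sum_n\frac{(-1)^{n-1}t^{2n-2}}{(2n-1)^{2j+1}}\ln^{k-1}\!\Bigl(\frac{1+t^2}{2t}\Bigr)dt=\frac{2}{(k-1)!}\int_0^1\frac{\ln^{k-1}(\frac{1+t^2}{2t})\operatorname{Ti}_{2j+1}(t)}{t}dt.$$
Both sides are then lower triangular in $j$ with nonzero diagonal entries $\frac{(-1)^m}{0!}$, so comparing coefficients as in Theorem \ref{thm3.1} gives the identity. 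The main obstacle is justifying the interchange of the Fourier series with the $x$-integral and with the $t$-integral, especially near $t=1$ and for $k=1$, where the series converges only conditionally. We would handle it by inserting $t^{2n-2}$ with $t<1$ (Abel summation) and letting $t\to1^-$, mirroring the implicit justifications in Theorems \ref{thm3.1} and \ref{thm3.4}.
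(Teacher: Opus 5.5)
Your proposal is correct and follows the paper's proof step for step. Like the paper, you shift part (ii) of Lemma \ref{lma2.2} to $\cos^{2n+1}(x)$, sum against $(2n+1)^{-k}$ to obtain $\int_0^{\pi/2}x^{2m}\chi_k(\cos(x))\,dx$, and expand via part (iii) of Lemma \ref{lma2.3}. You then evaluate $\int_0^{\pi/2}x^{2m}\cos((2n-1)x)\,dx$ by repeated integration by parts, resum into $\operatorname{Ti}_{2j+1}(t)/t$, and invert the triangular system, exactly as the paper does.

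The abandoned detours can simply be deleted. The identity you wrote in one of them is wrong as stated and should read $\cos((2n-1)x)=\tfrac12\bigl(h_n(x)+h_{n-1}(x)\bigr)$, without the extra factor $\cos(x)$. Your explicit treatment of $k=1$ via the Fourier series of $\operatorname{arctanh}(\cos(x))$ is worth keeping: it fills a point the paper leaves implicit, since Lemma \ref{lma2.3} is stated only for $k>1$.
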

\begin{proof}
Applying the substitution $n\mapsto n+1$ and the identity $\binom{2n+2}{n+1}=\frac{2(2n+1)}{n+1}\binom{2n}{n}$ to part (ii) of Lemma \ref{lma2.2}, it follows that
$$\int _0^{\frac{\pi }{2}}x^{2m}\cos ^{2n+1}(x)\,dx=(2m)!\frac{4^n}{(2n+1)\binom{2n}{n}}\sum _{j=0}^m\frac{(-1)^jt_{n+1}^{\star }(\{2\}_j)}{(2m-2j)!}\left(\frac{\pi }{2}\right)^{2m-2j}.$$
Multiplying this by $\frac{1}{(2n+1)^k}$ and summing $n$ on both sides from $0$ to $\infty $ yields
\begin{equation*}
(2m)!\sum _{n=0}^{\infty }\frac{4^n}{(2n+1)^{k+1}\binom{2n}{n}}\sum _{j=0}^m\frac{(-1)^jt_{n+1}^{\star }(\{2\}_j)}{(2m-2j)!}\left(\frac{\pi }{2}\right)^{2m-2j}=\int _0^{\frac{\pi }{2}}x^{2m}\chi _k(\cos (x))\,dx.\label{eq3.8}\tag{3.8}
\end{equation*}
Furthermore, by part (iii) of Lemma \ref{lma2.3} and the identity
$$\int _0^{\frac{\pi }{2}}x^{2m}\cos ((2n-1)x)\,dx=(2m)!\sum _{j=0}^m\frac{(-1)^{j+n-1}}{(2n-1)^{2j+1}(2m-2j)!}\left(\frac{\pi }{2}\right)^{2m-2j},$$
the right-hand side of \eqref{eq3.8} becomes
\begin{equation*}
\begin{aligned}
&\int _0^{\frac{\pi }{2}}x^{2m}\chi _k(\cos (x))\,dx\\
&=\frac{2}{(k-1)!}\sum _{n=1}^{\infty }\left(\int _0^1t^{2n-2}\ln ^{k-1}\!\left(\frac{1+t^2}{2t}\right)\,dt\right)\int _0^{\frac{\pi }{2}}x^{2m}\cos ((2n-1)x)\,dx\\
&=(2m)!\frac{2}{(k-1)!}\sum _{j=0}^m\frac{(-1)^j}{(2m-2j)!}\left(\frac{\pi }{2}\right)^{2m-2j}\int _0^1\frac{\ln ^{k-1}\!\left(\frac{1+t^2}{2t}\right)\operatorname{Ti}_{2j+1}(t)}{t}\,dt.
\end{aligned}\label{eq3.9}\tag{3.9}
\end{equation*}
Therefore, applying \eqref{eq3.9} to \eqref{eq3.8}, we derive
\begin{align*}
&\sum _{j=0}^m\frac{(-1)^j}{(2m-2j)!}\left(\frac{\pi }{2}\right)^{2m-2j}\left(\sum _{n=0}^{\infty }\frac{4^n}{(2n+1)^{k+1}\binom{2n}{n}}t_{n+1}^{\star }(\{2\}_j)\right)\\
&=\sum _{j=0}^m\frac{(-1)^j}{(2m-2j)!}\left(\frac{\pi }{2}\right)^{2m-2j}\left(\frac{2}{(k-1)!}\int _0^1\frac{\ln ^{k-1}\!\left(\frac{1+t^2}{2t}\right)\operatorname{Ti}_{2j+1}(t)}{t}\,dt\right).
\end{align*}
By comparing coefficients, we establish the stated identity.
\end{proof}
The next corollary appears as the second result in \cite[Example~2.8]{preprint3}
\begin{corollary} \label{corollary3.10}Let $j\in \mathbb{Z}_{\ge 0}$. Then
$$\sum _{n=0}^{\infty }\frac{4^n}{(2n+1)^2\binom{2n}{n}}t_{n+1}^{\star }(\{2\}_j)=2\,\beta (2j+2).$$
\end{corollary}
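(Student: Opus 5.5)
Take $k=1$ in Theorem \ref{thm3.9}. Then $\frac{2}{(k-1)!}=2$ and $\ln^{k-1}\!\left(\frac{1+t^2}{2t}\right)=1$, so the identity reduces to
$$\sum _{n=0}^{\infty }\frac{4^n}{(2n+1)^{2}\binom{2n}{n}}t_{n+1}^{\star }(\{2\}_j)=2\int _0^1\frac{\operatorname{Ti}_{2j+1}(t)}{t}\,dt.$$

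Next apply the integral recursion $\operatorname{Ti}_m(x)=\int _0^x\frac{\operatorname{Ti}_{m-1}(t)}{t}\,dt$ from the introduction, with $m=2j+2$ and $x=1$. This gives $\int _0^1\frac{\operatorname{Ti}_{2j+1}(t)}{t}\,dt=\operatorname{Ti}_{2j+2}(1)$. Since $\operatorname{Ti}_m(1)=\beta(m)$, the right-hand side equals $2\,\beta(2j+2)$, which is the claim.

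The only point to check is convergence at the endpoints.
\begin{itemize}
\item Near $t=0$ we have $\operatorname{Ti}_{2j+1}(t)\sim t$, so the integrand is bounded.
\item At $t=1$, the function $\operatorname{Ti}_{2j+1}$ is continuous on $[0,1]$ by absolute convergence for $2j+1\ge 2$. For $j=0$ it is $\arctan$, which is also continuous.
\end{itemize}
So the recursion holds at $x=1$, and I expect no real obstacle.
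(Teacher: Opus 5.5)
Your proposal is correct and follows the paper's own argument: set $k=1$ in Theorem~\ref{thm3.9} and evaluate $2\int_0^1 \operatorname{Ti}_{2j+1}(t)\,t^{-1}\,dt = 2\operatorname{Ti}_{2j+2}(1)=2\,\beta(2j+2)$. Your endpoint checks (the integrand is bounded near $t=0$, and $\operatorname{Ti}_{2j+1}$ is continuous on $[0,1]$, so the recursion holds at $x=1$) justify a step the paper leaves implicit.
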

\begin{proof}
Letting $k=1$ in Theorem \ref{thm3.9} yields
$$\sum _{n=0}^{\infty }\frac{4^n}{(2n+1)^2\binom{2n}{n}}t_{n+1}^{\star }(\{2\}_j)=2\int _0^1\frac{\operatorname{Ti}_{2j+1}(t)}{t}\,dt=2\operatorname{Ti}_{2j+2}(1).$$
Using $\operatorname{Ti}_s(1)=\beta (s)$ gives the claimed result.
\end{proof}

\begin{theorem} \label{thm3.11}Let $j,k\in \mathbb{Z}_{>0}$. Then
$$\sum _{n=0}^{\infty }\frac{4^n}{(2n+1)^{k+1}\binom{2n}{n}}\sum _{n\ge n_1\ge \dots \ge n_j\ge 0}\frac{\binom{2n_j}{n_j}}{(2n_j+1)4^{n_j}}\prod _{i=1}^{j-1}\frac{1}{(2n_i+1)^2}=\frac{2}{(k-1)!}\int _0^1\frac{\ln ^{k-1}\!\left(\frac{1+t^2}{2t}\right)\chi _{2j}(t)}{t}\,dt.$$
\end{theorem}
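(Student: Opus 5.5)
Theorem \ref{thm3.11} is the $(2n+1)$-indexed analogue of Theorem \ref{thm3.8}, so I would follow the proof of Theorem \ref{thm3.9}, using part (iv) of Lemma \ref{lma2.2} in place of part (ii).

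\textbf{Step 1: a shifted moment formula.} Substitute $n\mapsto n+1$ in part (iv) of Lemma \ref{lma2.2} and use $\binom{2n+2}{n+1}=\frac{2(2n+1)}{n+1}\binom{2n}{n}$, so that $\frac{1}{2}\frac{4^{n+1}}{(n+1)\binom{2n+2}{n+1}}=\frac{4^n}{(2n+1)\binom{2n}{n}}$. The nested sum in part (iv) runs over $n+1\ge n_1\ge\cdots\ge n_{j-1}>n_j\ge 0$ with weights $\frac{1}{(2n_i-1)^2}$. Shifting $n_i\mapsto n_i+1$ for $i\le j-1$ turns this into $n\ge n_1\ge\cdots\ge n_{j-1}\ge n_j\ge 0$ with weights $\frac{1}{(2n_i+1)^2}$, which is exactly the inner sum in the statement. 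For $j=1$ the sum collapses to $\sum_{n\ge n_1\ge 0}\frac{\binom{2n_1}{n_1}}{(2n_1+1)4^{n_1}}$, which also matches. Denote this inner sum by $S_n^{(j)}$. Step 1 then gives
$$\int_0^{\frac{\pi}{2}}x^{2m-1}\sin^{2n+1}(x)\,dx=(2m-1)!\frac{4^n}{(2n+1)\binom{2n}{n}}\sum_{j=1}^m\frac{(-1)^{j-1}S_n^{(j)}}{(2m-2j)!}\left(\frac{\pi}{2}\right)^{2m-2j}.$$

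\textbf{Step 2: sum over $n$.} Multiply by $(2n+1)^{-k}$ and sum over $n\ge 0$. The left side becomes $\int_0^{\frac{\pi}{2}}x^{2m-1}\chi_k(\sin(x))\,dx$. Interchanging sum and integral is justified for $k\ge 1$ by positivity (monotone convergence). On the right I expand $\chi_k(\sin x)$ using part (iv) of Lemma \ref{lma2.3}. That lemma is stated for $k>1$; the case $k=1$ can be handled directly, since $\chi_1(\sin x)=\operatorname{arctanh}(\sin x)$ has the Fourier series $2\sum_{n\ge 1}\frac{(-1)^{n-1}}{2n-1}\sin((2n-1)x)$, which agrees with the formula at $k=1$.

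\textbf{Step 3: the odd-moment integral.} I need
$$\int_0^{\frac{\pi}{2}}x^{2m-1}\sin((2n-1)x)\,dx=(2m-1)!\sum_{j=1}^m\frac{(-1)^{j+n}}{(2n-1)^{2j}(2m-2j)!}\left(\frac{\pi}{2}\right)^{2m-2j}.$$
This follows by repeated integration by parts, as in Lemma \ref{lma2.1}. At $x=\pi/2$ one has $\cos((2n-1)\pi/2)=0$ and $\sin((2n-1)\pi/2)=(-1)^{n-1}$, while the lower-limit terms vanish. I will fix the signs by checking the case $m=1$: the integral is $\frac{(-1)^{n-1}}{(2n-1)^2}$, which agrees.

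\textbf{Step 4: resum and compare.} Substitute Step 3 into the Fourier series. The factors $(-1)^{n-1}$ from Lemma \ref{lma2.3}(iv) and $(-1)^{n}$ from Step 3 combine to $-1$. The sum then becomes
$$\sum_n\frac{t^{2n-2}}{(2n-1)^{2j}}=\frac{\chi_{2j}(t)}{t}.$$
The right side is therefore
$$(2m-1)!\frac{2}{(k-1)!}\sum_{j=1}^m\frac{(-1)^{j-1}}{(2m-2j)!}\left(\frac{\pi}{2}\right)^{2m-2j}\int_0^1\frac{\ln^{k-1}\!\left(\frac{1+t^2}{2t}\right)\chi_{2j}(t)}{t}\,dt.$$
Both sides are now triangular in $j$ with identical nonzero coefficients $\frac{(-1)^{j-1}}{(2m-2j)!}\left(\frac{\pi}{2}\right)^{2m-2j}$ for every $m\ge 1$. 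Inducting on $m$ (the diagonal coefficient is $(-1)^{m-1}$) lets me compare the $j=m$ terms and obtain the identity.

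\textbf{Main obstacle.} I expect the hardest part to be the index and sign bookkeeping in Step 1, including the degenerate case $j=1$, together with the sign convention in Step 3. I would verify both by checking the case $m=1$ explicitly against a direct computation, using $\int_0^{\pi/2}x\sin^{2n+1}x\,dx=\frac{4^n}{(2n+1)\binom{2n}{n}}\sum_{\ell=0}^n\frac{\binom{2\ell}{\ell}}{(2\ell+1)4^\ell}$.
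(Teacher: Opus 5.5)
Your proposal is correct and matches the paper's proof step for step. You shift part (iv) of Lemma \ref{lma2.2}, sum against $(2n+1)^{-k}$ to get $\int_0^{\pi/2}x^{2m-1}\chi_k(\sin x)\,dx$, expand with part (iv) of Lemma \ref{lma2.3} and the odd-moment integral, resum to $\chi_{2j}(t)/t$, and invert the triangular system in $m$; your separate check of $k=1$, which Lemma \ref{lma2.3} formally excludes, is extra care the paper omits.
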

\begin{proof}
Replacing $n$ by $n+1$ in part (iv) of Lemma \ref{lma2.2}, and using the identity $\binom{2n+2}{n+1}=\frac{2(2n+1)}{n+1}\binom{2n}{n}$, we have
\begin{align*}
&\int _0^{\frac{\pi }{2}}x^{2m-1}\sin ^{2n+1}(x)\,dx\\
&=(2m-1)!\frac{4^n}{(2n+1)\binom{2n}{n}}\sum _{j=1}^m\frac{(-1)^{j-1}}{(2m-2j)!}\left(\frac{\pi }{2}\right)^{2m-2j}\sum _{n\ge n_1\ge \dots \ge n_j\ge 0}\frac{\binom{2n_j}{n_j}}{(2n_j+1)4^{n_j}}\prod _{i=1}^{j-1}\frac{1}{(2n_i+1)^2}.
\end{align*}
Multiplying both sides by $\frac{1}{(2n+1)^k}$ and summing from $n=0$ to $\infty $, we obtain
\begin{equation*}
\begin{aligned}
&\int _0^{\frac{\pi }{2}}x^{2m-1}\chi _k(\sin (x))\,dx\\
&=(2m-1)!\sum _{n=0}^{\infty }\frac{4^n}{(2n+1)^{k+1}\binom{2n}{n}}\sum _{j=1}^m\frac{(-1)^{j-1}}{(2m-2j)!}\left(\frac{\pi }{2}\right)^{2m-2j}\sum _{n\ge n_1\ge \dots \ge n_j\ge 0}\frac{\binom{2n_j}{n_j}}{(2n_j+1)4^{n_j}}\prod _{i=1}^{j-1}\frac{1}{(2n_i+1)^2}.
\end{aligned}\label{eq3.10}\tag{3.10}
\end{equation*}
In addition, from part (iv) of Lemma \ref{lma2.3} and the result
$$\int _0^{\frac{\pi }{2}}x^{2m-1}\sin ((2n-1)x)\,dx=(2m-1)!\sum _{j=1}^m\frac{(-1)^{j+n}}{(2n-1)^{2j}(2m-2j)!}\left(\frac{\pi }{2}\right)^{2m-2j},$$
it follows that
\begin{equation*}
\begin{aligned}
&\int _0^{\frac{\pi }{2}}x^{2m-1}\chi _k(\sin (x))\,dx\\
&=\frac{2}{(k-1)!}\sum _{n=1}^{\infty }(-1)^{n-1}\left(\int _0^1t^{2n-2}\ln ^{k-1}\!\left(\frac{1+t^2}{2t}\right)\,dt\right)\int _0^{\frac{\pi }{2}}x^{2m-1}\sin ((2n-1)x)\,dx\\
&=(2m-1)!\frac{2}{(k-1)!}\sum _{j=1}^m\frac{(-1)^{j-1}}{(2m-2j)!}\left(\frac{\pi }{2}\right)^{2m-2j}\int _0^1\frac{\ln ^{k-1}\!\left(\frac{1+t^2}{2t}\right)\chi _{2j}(t)}{t}\,dt.
\end{aligned}\label{eq3.11}\tag{3.11}
\end{equation*}
Substituting \eqref{eq3.11} in \eqref{eq3.10} yields
\begin{align*}
&\sum _{j=1}^m\frac{(-1)^{j-1}}{(2m-2j)!}\left(\frac{\pi }{2}\right)^{2m-2j}\left(\sum _{n=0}^{\infty }\frac{4^n}{(2n+1)^{k+1}\binom{2n}{n}}\sum _{n\ge n_1\ge \dots \ge n_j\ge 0}\frac{\binom{2n_j}{n_j}}{(2n_j+1)4^{n_j}}\prod _{i=1}^{j-1}\frac{1}{(2n_i+1)^2}\right)\\
&=\sum _{j=1}^m\frac{(-1)^{j-1}}{(2m-2j)!}\left(\frac{\pi }{2}\right)^{2m-2j}\left(\frac{2}{(k-1)!}\int _0^1\frac{\ln ^{k-1}\!\left(\frac{1+t^2}{2t}\right)\chi _{2j}(t)}{t}\,dt\right).
\end{align*}
Upon comparing coefficients, the desired result follows.
\end{proof}
The following corollary recovers Theorem 2.5 in \cite{journal2}.
\begin{corollary} \label{corollary3.12}Let $j\in \mathbb{Z}_{>0}$. Then
$$\sum _{n=0}^{\infty }\frac{4^n}{(2n+1)^2\binom{2n}{n}}\sum _{n\ge n_1\ge \dots \ge n_j\ge 0}\frac{\binom{2n_j}{n_j}}{(2n_j+1)4^{n_j}}\prod _{i=1}^{j-1}\frac{1}{(2n_i+1)^2}=2\,\lambda (2j+1).$$
\end{corollary}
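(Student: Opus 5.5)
This is the case $k=1$ of Theorem \ref{thm3.11}, so the plan is to specialize and then evaluate a single elementary integral. With $k=1$ the prefactor $\frac{2}{(k-1)!}$ becomes $2$. The factor $\ln^{k-1}\!\left(\frac{1+t^2}{2t}\right)$ becomes $\ln^0=1$; this is legitimate on $(0,1)$, where $\frac{1+t^2}{2t}>1$. The exponent $k+1$ in the summand becomes $2$, which is exactly the left-hand side of the corollary. Hence the left-hand side equals
$$2\int_0^1\frac{\chi_{2j}(t)}{t}\,dt.$$

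To evaluate this, we use the integral recursion $\chi_s(x)=\int_0^x\frac{\chi_{s-1}(t)}{t}\,dt$ from the introduction with $s=2j+1$ and $x=1$. This gives $\int_0^1\frac{\chi_{2j}(t)}{t}\,dt=\chi_{2j+1}(1)$. The value at $x=1$ is admissible because $2j+1\ge 3>1$, so the defining series converges at $x=1$. Equivalently, one can integrate the series $\chi_{2j}(t)/t=\sum_{n\ge1} t^{2n-2}/(2n-1)^{2j}$ termwise, which is justified by monotone convergence since all terms are nonnegative. Finally, $\chi_s(1)=\lambda(s)$ gives $2\lambda(2j+1)$, as claimed.

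There is essentially no obstacle. The only point requiring care is that Theorem \ref{thm3.11} is stated for $k\in\mathbb{Z}_{>0}$, so $k=1$ is allowed. Also, the nested sum in the corollary is indexed exactly as in that theorem, with $n_j\ge 0$ and the factors $(2n_i+1)^{-2}$, so no reindexing is needed.
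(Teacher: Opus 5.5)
Your proposal is correct and is exactly the paper's argument: set $k=1$ in Theorem~\ref{thm3.11} to get $2\int_0^1 \chi_{2j}(t)\,t^{-1}\,dt = 2\chi_{2j+1}(1)$, then apply $\chi_s(1)=\lambda(s)$. Your remarks on convergence at $x=1$ (since $2j+1\ge 3$) and on termwise integration by monotone convergence are valid justifications that the paper leaves implicit.
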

\begin{proof}
Setting $k=1$ in Theorem \ref{thm3.11}, we have
$$\sum _{n=0}^{\infty }\frac{4^n}{(2n+1)^2\binom{2n}{n}}\sum _{n\ge n_1\ge \dots \ge n_j\ge 0}\frac{\binom{2n_j}{n_j}}{(2n_j+1)4^{n_j}}\prod _{i=1}^{j-1}\frac{1}{(2n_i+1)^2}=2\int _0^1\frac{\chi _{2j}(t)}{t}\,dt=2\,\chi _{2j+1}(1).$$
From $\chi _s(1)=\lambda (s)$, we obtain the stated identity.
\end{proof}
\newpage
\section{Concluding Remarks}
In this work, we have established integral representations for six families of multiple Ap\'ery-like series using elementary methods, namely repeated applications of integration by parts to trigonometric integrals together with Fourier expansions for the polylogarithm and the Legendre chi function with trigonometric arguments. The combination of these tools led to the recovery of several known evaluations and yielded a new identity expressing a class of such series as finite alternating sums of products of Dirichlet eta values.

A natural direction for future work would be the application of the techniques in Lemma \ref{lma2.2} to trigonometric integrals over alternative ranges. The restriction to $[0,\frac{\pi}{2}]$ plays a key role in yielding the central binomial coefficients $4^n/n\binom{2n}{n}$ and $\binom{2n}{n}/4^n$, and the associated nested harmonic structures in a tractable form. Replacing this interval by $[0,\frac{\pi}{3}]$ or $[0,\frac{\pi}{4}]$ is of particular interest, since the endpoint values $\cos (\frac{\pi }{3})=\frac{1}{2}$ and $\cos (\frac{\pi }{4})=\frac{1}{\sqrt{2}}$ lead to modified integral evaluations, for example
$$\int _0^{\frac{\pi }{3}}\cos ^{2n}(x)\,dx=\frac{\binom{2n}{n}}{4^n}\left(\frac{\pi }{3}+\frac{\sqrt{3}}{2}\sum _{n_1=1}^n\frac{1}{n_1\binom{2n_1}{n_1}}\right),\quad \int _0^{\frac{\pi }{4}}\cos ^{2n}(x)\,dx=\frac{\binom{2n}{n}}{4^n}\left(\frac{\pi }{4}+\frac{1}{2}\sum _{n_1=1}^n\frac{2^{n_1}}{n_1\binom{2n_1}{n_1}}\right),$$
which suggest that related multiple Apéry-like series with different combinatorial weights may admit similar integral representations.

Moreover, it remains an open question whether such representations of a similar type can be derived using Fourier expansions not considered in the present work, perhaps involving other special functions, and whether analogous closed-form evaluations can be obtained for further values of the parameter $k$ in the remaining families, where such formulas are more difficult to establish.


\begin{thebibliography}{999}
\bibitem{book5}  C.~I.~V\u{a}lean, \textit{(Almost) Impossible Integrals, Sums, and Series}, Springer, Cham, 2019.
\bibitem{book1}  C.~I.~V\u{a}lean, \textit{More (Almost) Impossible Integrals, Sums, and Series}, Springer, Cham, 2023.
\bibitem{preprint3} C.~Xu and J.~Zhao, Ap\'ery-Type Series with Summation Indices of Mixed Parities and Colored Multiple Zeta Values, I, \textit{arXiv preprint} arXiv:2202.06195v2, 2022.
\bibitem{journal5} C.~Xu and J.~Zhao, Ap\'ery-type series and colored multiple zeta values, \textit{Advances in Applied Mathematics}, \textbf{153}:102610, 2024.
\bibitem{preprint1} C.~Xu, On the proof of the Gen{\v c}ev--Rucki conjecture for multiple Ap\'ery-like series, \textit{arXiv preprint} arXiv:2510.09052, 2025.
\bibitem{journal9} D.~H.~Lehmer, Interesting Series Involving the Central Binomial Coefficient, \textit{The American Mathematical Monthly}, \textbf{92}(7):449--457, 1985.
\bibitem{journal11} I.~J.~Zucker, On the series $\sum _{k=1}^{\infty }\binom{2k}{k}^{-1}k^{-n}$ and related sums, \textit{Journal of Number Theory}, \textbf{20}(1):92--102, 1985.
\bibitem{book3}  I.~S.~Gradshteyn and I.~M.~Ryzhik, \textit{Table of Integrals, Series, and Products}, 8th edition, Academic Press, New York, 2014.
\bibitem{journal10} J.~M.~Borwein,~D.~J.~Broadhurst and J.~Kamnitzer, Central Binomial Sums, Multiple Clausen Values, and Zeta Values, \textit{Experimental Mathematics}, \textbf{10}(1):25--34, 2001.
\bibitem{journal7} J.~A.~G.~Layja, Evaluating six Ap\'ery-like series of weight 5, \textit{Integral Transforms and Special Functions}, \textbf{37}(4):249--265, 2026.
\bibitem{preprint2} J.~A.~G.~Layja, On the Evaluation of Ap\'ery-Like Series Involving Multiple $t$-Harmonic Star Sums, \textit{arXiv preprint} arXiv:2601.20027, 2026.
\bibitem{book4}  L.~Lewin, \textit{Polylogarithms and Associated Functions}, North Holland, New York, 1981.
\bibitem{journal3} M.~E.~Hoffman, Multiple harmonic series, \textit{Pacific Journal of Mathematics}, \textbf{152}(2):275--290, 1992.
\bibitem{journal1} M.~E.~Hoffman, An odd variant of multiple zeta values, \textit{Communications in Number Theory and Physics}, \textbf{13}(3):529--567, 2019.
\bibitem{journal8} M.~Cantarini and J.~D'Aurizio, On the interplay between hypergeometric series, Fourier--Legendre expansions and Euler sums, \textit{Bollettino dell'Unione Matematica Italiana}, \textbf{12}:623--656, 2019.
\bibitem{journal2} M.~Gen\v{c}ev and P.~Rucki, On a class of multiple Ap\'ery-like series and their reduction, \textit{Mediterranean Journal of Mathematics}, \textbf{22}:195, 2025.
\bibitem{journal4} R.~Ap\'ery, Irrationalit\'e de $\zeta (2)$ et $\zeta (3)$, \textit{Ast\'erisque}, \textbf{61}:11--13, 1979.
\bibitem{journal6} W.~Chu, Alternating series of Ap\'ery-type for the Riemann zeta function, \textit{Contributions to Discrete Mathematics}, \textbf{15}(3):108--116, 2020.
\bibitem{journal12} X.~Chen and W.~Wang, Ap\'ery-type series via colored multiple zeta values and Fourier-Legendre series expansions, \textit{Journal of Symbolic Computation}, \textbf{134}:102508, 2026.
\end{thebibliography}
\end{document}